\newcommand{\RR}{\mathbbm{R}}
\newcommand{\ZZ}{\mathbbm{Z}}
\DeclareMathOperator{\interior}{int}
\DeclareMathOperator{\conv}{conv}
\theoremstyle{plain} % Heading is bold, text italic.
\newtheorem{theorem}{Theorem}
\newtheorem{proposition}[theorem]{Proposition}
\newtheorem{corollary}[theorem]{Corollary}
\theoremstyle{definition}  % Heading is bold, text is roman
\newtheorem{definition}{Definition}
\newtheorem{example}{Example}
\theoremstyle{remark}  % Heading is italic, text is roman
\newtheorem*{remark}{Remark}
\renewcommand{\P}{{\mathcal P}}
\newcommand{\F}{{\mathcal F}}
\newcommand{\G}{{\mathcal G}}
\newcommand{\e}{{\mathbf e}}
\renewcommand{\v}{{\mathbf v}}
\renewcommand{\t}{{\mathbf t}}
\newcommand{\x}{{\mathbf x}}
\renewcommand{\u}{{\mathbf u}}
\newcommand{\y}{{\mathbf y}}
\renewcommand{\a}{{\mathbf a}}
\newcommand{\p}{{\mathbf p}}
\newcommand{\0}{{\mathbf 0}}
\newcommand{\N}{{\mathbb N}}
\newcommand{\Z}{{\mathbb Z}}
\newcommand{\R}{{\mathbb R}}
\newcommand{\V}{{\mathcal V}}
\newcommand{\T}{{\mathcal T}}
\newcommand{\M}{{\mathcal M}}
\newcommand{\A}{{\mathcal A}}
\newcommand{\Lat}{{\mathcal L}}
\renewcommand{\L}{{\mathcal L}}
\renewcommand{\S}{{\mathcal S}}
\newcommand{\wordlength}{\operatorname{w}}
\newcommand{\growth}{\operatorname{S}}
\newcommand{\cone}{\operatorname{cone}}
\newcommand{\triang}{\operatorname{triang}}
\newcommand{\total}{\operatorname{total}}
\begin{document}

\title{Root polytopes and growth series of root lattices}

%%%%%%%%%%%%%%%%%%%%%%%%%%%%%%%%%%%%%%%%

\author{Federico Ardila}
\email{\vspace{0.1cm}federico@math.sfsu.edu, beck@math.sfsu.edu, serkan@math.sfsu.edu}
\address{Department of Mathematics, San Francisco State University.\vspace{-0.5cm}}
\author{Matthias Beck}
%\email{beck@math.sfsu.edu}
%\address{Department of Mathematics, San Francisco State University.}
\author{Serkan Ho\c{s}ten}
%\email{serkan@math.sfsu.edu}
%\address{Department of Mathematics, San Francisco State University.}
\author{Julian Pfeifle}
\email{julian.pfeifle@upc.edu}
\address{Departament de Matem\`atica Aplicada II, Universitat Polit\`ecnica de Catalunya.}
\author{Kim Seashore}
\email{kseashore@sfsu.edu}
\address{SESAME (Graduate group in Math and Science Education), Univ. of California, Berkeley.}

\thanks{Research partially supported by NSF grants  DMS-0801075
  (Ardila) and DMS-0810105 (Beck), and MEC grants MTM2005-08618-C02-01 and MTM2006-1267 (Pfeifle)}

%%%%%%%%%%%%%%%%%%%%%%%%%%%%%%%%%%%%%%%%%%%%%%%%%%

\begin{abstract}
  The convex hull of the roots of a classical root lattice is called a
  \emph{root polytope}. We determine explicit unimodular
  triangulations of the boundaries of the root polytopes associated to
  the root lattices $A_n$, $C_n$ and $D_n$, and compute their $f$-and $h$-vectors. 
  This leads us to recover formulae for the growth series of these root lattices,
  which were first
  conjectured by Conway--Mallows--Sloane and Baake--Grimm and proved by
  Conway--Sloane and Bacher--de la Harpe--Venkov.
  \end{abstract}
%%%%%%%%%%%%%%%%%%%%%%%%%%%%%%%%%%%%%%%%%%%%%%%%

\date{26 September 2008}

\maketitle

%\tableofcontents

\section{Introduction}

A \emph{lattice} $\Lat$ is a discrete subgroup of $\R^n$ for some $n
\in \Z_{> 0}$. The \emph{rank} of a lattice is the dimension of the
subspace spanned by the lattice. We say that a lattice $\Lat$ is
\emph{generated as a monoid} by a finite collection of vectors $\M
= \{\a_1,\dots,\a_r\}$ if each $\u \in \Lat$ is a
nonnegative integer combination of the vectors in $\M$. For
convenience, we often write the vectors from $\M$ as columns of a
matrix $M \in \R^{n \times r}$, and to make the connection between
$\Lat$ and $M$ more transparent, we refer to the lattice generated by $M$ as $\Lat_M$.
The \emph{word length} of $\u$ with respect to $\M$, denoted
$\wordlength(\u)$, is $\min(\sum c_i)$ taken over all expressions $\u = \sum c_i \a_i$ with $c_i \in \Z_{\ge 0}$. The
\emph{growth function} $\growth(k)$ counts the number of elements
$\u\in \Lat$ with word length $\wordlength(\u) = k$ with respect to
$\M$.  We define the \emph{growth
series} to be the generating function $G(x):= \sum_{k \geq 0} \growth(k) \,
x^k$. It is a rational function $G(x)= \frac{h(x)}{(1-x)^d}$ where $h(x)$ is
a polynomial of degree less than or equal to the rank $d$ of $\Lat_M$ \cite{benson}.
We call $h(x)$ the \emph{coordinator polynomial} of the growth
series. It is important to keep in mind that these functions all
depend on the choice of generators for the monoid.

In this paper we examine the growth series for the classical root
lattices $A_n$, $C_{n}$ and $D_n$, generated as monoids by their standard set of generators. 
Conway and Sloane  \cite{conway} proved an explicit formula for the growth series for $A_n$ and, with Mallows's help, conjectured one for the $D_n$~case. Baake and Grimm \cite{baake} later conjectured formulae for the $B_n$ and $C_n$ cases. Bacher, de la Harpe, and Venkov \cite{bacherharpevenkov} subsequently provided the proofs of all these cases. We give alternative proofs of the formulae in the cases $A_n, C_n,$ and $D_n$, by computing the $f$-vector of a unimodular triangulation of the corresponding root polytope.

The approach presented here is a natural
extension of the proofs related to the growth series of cyclotomic
lattices given in \cite{cyclotomic}.
We let $\P_{\M}$ be the polytope formed by the convex
hull of the generating vectors in $\M$. For the lattices we consider, this polytope is the root polytope of the corresponding lattice. We determine
explicit  unimodular triangulations of these polytopes and show that the 
$h$-polynomial of these triangulations (hence, of any unimodular triangulation) is identical to
the coordinator polynomial $h(x)$ for the respective root lattice. 
Our method implies that $h(x)$ is necessarily  palindromic 
and must have nonnegative coefficients; this is confirmed by the formulae.
Incidentally, since the coordinator polynomial for the growth series of the root lattice $B_n$ is not palindromic, our approach would need to be modified to prove the formula in the $B_n$~case.

To state our main results, let $A_{n}$, $C_n$, and $D_n$ be the classical root lattices generated as monoids by 
\begin{eqnarray*}
\M_{A_{n}} &=& \{\e_i - \e_j : 0 \leq i,j \leq n+1\}
\\
\M_{C_n} &=& \{\pm 2\e_i : 1 \leq i \leq n \, ;\,  \pm \e_i \pm \e_j:
1  \leq i \neq j \leq n\},
\\
% \{\pm \e_i \pm \e_j: 1
%  \leq i,j \leq n\}$, 
\M_{D_n} &=& \{\pm \e_i \pm \e_j: 1
\leq i \neq j \leq n\},
\end{eqnarray*}
respectively, and define the classical root polytopes $\P_{A_{n}} =
\conv \M_{A_n} $, $\P_{C_n} = \conv \M_{C_n}$, and $\P_{D_n} = \conv
\M_{D_n}$ to be the convex hulls of these generating sets.

The \emph{$f$-vector} of a simplicial complex $\Gamma$ is
given by $f(\Gamma) = (f_{-1},f_0,f_1,\dots,f_{n-1})$ where
%$f_{-1}=1$, $f_0 = r+1$, and 
$f_i$ is the number of $i$-dimensional
faces of $\Gamma$; by convention $f_{-1}=1$. The \emph{$f$ and $h$-polynomials} of $\Gamma$ are defined \cite{Ziegler}\footnote{Some authors use a slightly different definition.}
 to be
\[f_{\Gamma}(x)= \sum_{i=-1}^d f_{i} \, x^{d-i} \qquad h_{\Gamma}(x) = f_{\Gamma}(x-1)=  \sum_{i=-1}^d
f_{i} \, (x-1)^{d-i}.\]
%\[f_{\Gamma}(x)= \sum_{i=0}^n f_{i-1}x^{n-i} \qquad h_{\Gamma}(x) = f_{\Gamma}(x-1)=  \sum_{i=0}^n
%f_{i-1}(x-1)^{n-i}.\]
%and the \emph{$h$-polynomial} of $\Gamma$ is defined in terms of the
%{$f$-polynomial} by \[h_{\Gamma}(x) = f_{\Gamma}(x-1)=  \sum_{i=0}^n
%f_{i-1}(x-1)^{n-i}.\] 

\begin{theorem}\label{fpolys}
Let $f_{A_n}(x)$, $f_{C_n}(x)$, and $f_{D_n}(x)$ be the $f$-polynomials of any unimodular triangulations of the boundaries of the classical root polytopes. Then
% have the same $f$-polynomials $f_{A_n}(x)$, $f_{C_n}(x)$, and
% $f_{D_n}(x)$ respectively, which are given by:
\begin{eqnarray*}
  f_{A_n}(x) &=& \sum_{m=0}^n {n+m \choose m,m,n-m}x^{n-m}
   %\sum_{k=0}^{n} \binom{n}{k}^2 x^k
  ,  \\
  f_{C_n}(x) &=& %\sum_{k=0}^n   \binom{2n}{2k} x^k
  \sum_{m=0}^n
  \frac{n \, 2^{2m}}{n+m}\binom{n+m}{2m} x^{n-m}
  , \\
    f_{D_n}(x) &=&  
  \sum_{k=0}^n
  \left(
  \frac{n \, 2^{2m}}{n+m}\binom{n+m}{2m} - 
    \frac{n(2n-m-1)2^{m-1}}{n-m}{n-2 \choose m-1}
  \right) x^{n-m}.
\end{eqnarray*}
\end{theorem}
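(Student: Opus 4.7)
The plan is to prove Theorem \ref{fpolys} by exhibiting, for each $X \in \{A_n, C_n, D_n\}$, an explicit unimodular triangulation $\T_X$ of the boundary $\partial \P_X$ and counting its $(n-m)$-dimensional faces directly. Since any two unimodular triangulations of a given lattice polytope share the same $h$-polynomial, and hence the same $f$-polynomial via $f_\Gamma(x) = h_\Gamma(x+1)$, it is enough to produce one specific triangulation in each case. In each case I would (i) describe the facets of $\P_X$ combinatorially, (ii) triangulate each facet so that the pieces glue coherently along shared lower-dimensional faces, (iii) verify that every top simplex has normalized lattice volume $1$ with respect to the corresponding root lattice, and (iv) enumerate the faces in each dimension.

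For type $A_n$, I would exploit the classical fact that each facet of $\P_{A_n}$ is a product of simplices $\Delta^p \times \Delta^q$ indexed by an ordered bipartition of $\{0,1,\dots,n+1\}$. Each such product carries its canonical staircase (lattice-path) triangulation, which is automatically unimodular and glues globally. The $(n-m)$-faces then correspond to a pair of subsets together with a partial lattice path, and a direct bijective count yields the trinomial coefficient ${n+m \choose m,m,n-m}$. For type $C_n$, I would use the $2^n$-fold coordinate-sign symmetry of $\P_{C_n}$: its boundary decomposes into simplicial cells indexed by $(\varepsilon_1,\dots,\varepsilon_n) \in \{\pm 1\}^n$, each spanned by roots of the form $2\varepsilon_i \e_i$ and $\varepsilon_i \e_i + \varepsilon_j \e_j$. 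After triangulating one reference cell unimodularly, I would extend equivariantly under the sign group and then organize the face count by the unsigned support of each simplex; the summation collapses to the Catalan-triangle entry $\frac{n\,2^{2m}}{n+m}\binom{n+m}{2m}$. For type $D_n$, since $\P_{D_n}$ arises from $\P_{C_n}$ by removing the vertices $\pm 2\e_i$, I would excise from $\T_{C_n}$ every simplex incident to such a vertex and retriangulate the resulting cavities using only roots $\pm \e_i \pm \e_j$; the correction term in $f_{D_n}(x)$ should record exactly the net change in the dimension-graded face count.

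The main obstacle is the $D_n$ case, for three compounding reasons. First, unimodularity must be checked with respect to the root lattice $D_n = \{x \in \Z^n : \sum x_i \text{ even}\}$, which sits with index $2$ in $\Z^n$, so the lattice-basis condition on each maximal simplex differs from the $C_n$ case and must be verified by direct determinant computation. Second, not all facets of $\P_{D_n}$ are simplicial, so their triangulation involves a genuine combinatorial choice that must be compatible with the hyperoctahedral symmetry. Third, the retriangulated cavities left by the excised vertices $\pm 2\e_i$ must glue coherently across all $2^n$ sign patterns. I expect the cleanest route is to fix a canonical linear ordering of the coordinates, define the triangulation facet by facet from this data, and prove the required binomial identities either bijectively in terms of signed Motzkin-style paths, or by a generating-function comparison with the Bacher--de la Harpe--Venkov formula for the $D_n$ growth series.
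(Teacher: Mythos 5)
Your outline is sound for $A_n$ and essentially reproduces the paper's argument there: triangulate each facet $\Delta_S\times\Delta_T$ by the staircase triangulation, observe that the faces are the ``partial staircases,'' and count them to get the trinomial coefficient. (Your framing that one explicit unimodular triangulation suffices, because the $f$-polynomial is determined through $h=f(x-1)$ and the $h^*$-polynomial, is also correct and is how the paper justifies the word ``any.'') The gap is in the $C_n$ and $D_n$ cases, where the entire content of the theorem is the enumeration, and your proposal replaces it with assertions. For $C_n$ you write that after an equivariant triangulation of the $2^n$ simplicial facets of $2\Diamond_n$ ``the summation collapses to the Catalan-triangle entry'' $\frac{n\,2^{2m}}{n+m}\binom{n+m}{2m}$; that collapse is a nontrivial identity and nothing in the proposal produces it. The paper in fact does not count faces of a $C_n$ triangulation directly at all: it computes the $h$-polynomial by an inclusion--exclusion of the Hilbert series $H(K[V(2,k)],x)=\sum_i\binom{k}{2i}x^i/(1-x)^k$ over the $\binom{n}{j}2^j$ faces of $\Diamond_n$, recognizes the numerator as the even part of $\bigl((x-1)+(2+2\sqrt x)\bigr)^n=(1+\sqrt x)^{2n}$, and only then converts $h$ back to $f$. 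You would need either to reproduce such a computation or to supply the bijection you allude to.

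For $D_n$ the problem is more serious. Your surgery ($\T_{C_n}\to\T_{D_n}$ by excising the stars of $\pm2\e_i$ and refilling) is geometrically reasonable --- it is the reverse of the paper's Section on the $C_n$ reprise, and the ``cavities'' are exactly the $2n$ cross-polytope facets $\Diamond_{n-1}$, which must be filled by a diagonal triangulation avoiding the centroid (the centroid has odd coordinate sum, hence is not in the lattice). But the surgery does not touch the hypersimplex facets $\Delta_{2,n}$, so it cannot by itself produce the face counts: the dominant contribution to $f_{D_n}$ comes from the induced triangulations of the $2^n$ hypersimplex facets, and counting their faces is the hard step. The paper needs two specific inputs you do not identify: the Lam--Postnikov generating function for the $f$-vector of the standard (Stanley/Sturmfels) triangulation of $\Delta_{2,n}$, and an inclusion--exclusion over the intersections of hypersimplex facets (which are lower-dimensional hypersimplices, counted by vectors with $i-1$ zero entries and $\pm1$ elsewhere). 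Without these, ``the correction term should record exactly the net change'' is a restatement of what must be proved, not a proof. Finally, your fallback of ``generating-function comparison with the Bacher--de la Harpe--Venkov formula'' would make the argument circular relative to the paper's purpose: Theorem \ref{fpolys} is the input from which Theorem \ref{maintheorem} (the BdlHV formulae) is to be rederived, so the $f$-vectors must be established independently of those formulae.
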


\begin{theorem}\label{maintheorem}\cite{bacherharpevenkov, conway}
The
coordinator polynomial of the growth series of the classical root lattices $A_n, C_n,$ and $D_n$ with respect to the generating sets $\M_{A_n}, \M_{C_n}$, and $\M_{D_n}$
 is equal to the
$h$-polynomial of any  unimodular
triangulation of the respective polytopes $\P_{A_n}$, $\P_{C_n}$, and $\P_{D_n}$. These polynomials 
are palindromic and have nonnegative coefficients. More specifically, 
  \[ h_{A_n}(x) = \sum_{k=0}^{n} \binom{n}{k}^2 x^k,  \quad
  h_{C_n}(x) = \sum_{k=0}^n   \binom{2n}{2k} x^k, \quad
  h_{D_n}(x) =  \sum_{k=0}^n \left[\binom {2n}{2k} -\frac{2k(n-k)}{n-1} \binom n k \right] x^k.\]
 \end{theorem}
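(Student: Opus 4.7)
The plan is to derive Theorem \ref{maintheorem} from Theorem \ref{fpolys} by means of a general transfer principle: for a lattice $\L_\M$ whose generating set $\M$ is centrally symmetric, the coordinator polynomial of $G(x)$ will coincide with the $h$-polynomial of any unimodular triangulation $\T$ of $\partial \P_\M$.

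The first step is to establish the identity
\[
\wordlength(\u) \;=\; \min\{k \ge 0 : \u \in k\P_\M\}
\]
for every $\u \in \L_\M$. Since $\M_{A_n}$, $\M_{C_n}$, and $\M_{D_n}$ are centrally symmetric, $\0$ lies in $\interior \P_\M$. The cones $\cone(\sigma)$ over the maximal simplices of $\T$ therefore tile $\R^d$ (where $d$ is the rank of $\L_\M$), and any $\u \in \L_\M$ lies in some $\cone(\sigma)$. Unimodularity of $\sigma$ forces a unique expression $\u = \sum c_i \v_i$ with $\v_i \in \M$ and $c_i \in \Z_{\ge 0}$; the value $\sum c_i$ is simultaneously an upper bound for $\wordlength(\u)$ via this very decomposition and a lower bound, since any writing $\u = \sum d_j \a_j$ with $\a_j \in \M$ and $\sum d_j = k$ yields $\u/k \in \P_\M$.

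The second step is to convert this into the desired generating-function equality. By a regular (line-shelling) half-open decomposition $\R^d = \bigsqcup_{\sigma \in \T} \tilde C_\sigma$, each half-open unimodular cone $\tilde C_\sigma$, obtained from $\cone(\sigma)$ by deleting $r(\sigma)$ facets, contributes $x^{r(\sigma)}/(1-x)^d$ to $G(x) = \sum_\u x^{\wordlength(\u)}$. The shelling characterization of the $h$-polynomial then yields
\[
G(x) \;=\; \sum_{\sigma \in \T} \frac{x^{r(\sigma)}}{(1-x)^d} \;=\; \frac{h_\T(x)}{(1-x)^d},
\]
so the coordinator polynomial is $h_\T(x) = f_\T(x-1)$.

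The final step is to substitute the $f$-polynomials from Theorem \ref{fpolys} and verify the stated closed forms. For $A_n$, after expanding $(x-1)^{n-m}$ and interchanging summations, the coefficient of $x^k$ reduces via a Vandermonde-type convolution to $\binom{n}{k}^2$. For $C_n$, a standard hypergeometric identity converts the sum to $\binom{2n}{2k}$. For $D_n$, the same identity handles the $C_n$-like piece of $f_{D_n}$, and subtracting the analogous contribution of the correction term produces the extra $-\tfrac{2k(n-k)}{n-1}\binom{n}{k}$. Palindromicity is transparent from the closed forms via $\binom{n}{k}=\binom{n}{n-k}$ and $\binom{2n}{2k}=\binom{2n}{2n-2k}$, together with the $k\leftrightarrow n-k$ symmetry of the $D_n$ correction, while non-negativity is automatic from the shelling interpretation $h_i = \#\{\sigma \in \T : r(\sigma) = i\}$.

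The main obstacle will be the binomial-sum manipulation in the last step, most acutely for $D_n$, where the two pieces of $f_{D_n}$ must be carried separately through the $x \mapsto x-1$ substitution and recombined. The first two steps, by contrast, largely reprise the cyclotomic-lattice approach of \cite{cyclotomic} and should go through with only notational modifications; the genuine novelty of the paper lies in the construction of the unimodular triangulations that underpin Theorem \ref{fpolys}.
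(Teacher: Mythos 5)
Your overall strategy is sound and lands in the same place as the paper, but each half of your argument diverges from the paper's route in ways worth recording. For the transfer principle, the paper works algebraically: it identifies the coordinator polynomial with the $h$-polynomial of the monoid algebra $K[M']$ (Proposition \ref{prop:hilbert}), and then shows $H(K[M'];x)=H(K[\Gamma];x)$ by decomposing the monoid $\S(M')$ as a disjoint union over \emph{all} faces $\sigma$ of the triangulation, each contributing $x^{|\sigma|}/(1-x)^{|\sigma|}$, i.e.\ the Stanley--Reisner Hilbert series. Your half-open decomposition over the \emph{maximal} cones, contributing $x^{r(\sigma)}/(1-x)^d$ apiece, is the shelling-style counterpart of this and is equally valid; your Step~1 (word length equals $\min\{k:\u\in k\P_\M\}$) is precisely the normality statement the paper encodes as $\S(M')=\cone(M')\cap\Z^{n+1}$. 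Two small cautions: the statistics $r(\sigma)$ produce $\sum_i h_i x^i$, whereas the paper's convention $h_\T(x)=f_\T(x-1)$ yields the reversed polynomial $\sum_i h_i x^{d-i}$, so you need palindromicity (or an explicit reversal) before equating the two --- you do note this; and for $C_n$ ``unimodular'' must be read relative to the lattice $\Lat_{\M_{C_n}}$ rather than $\Z^n$, as the paper is careful to stress.

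For the explicit formulas, your plan inverts the paper's logic for two of the three lattices. The paper carries out the substitution $x\mapsto x-1$ with a binomial convolution only for $A_n$ (Theorem \ref{ftoh}); for $C_n$ it computes $h_{C_n}$ \emph{directly} by an inclusion--exclusion of Hilbert series over the faces of the cross-polytope (collapsing to $(1+\sqrt x)^{2n}$), and for $D_n$ it assembles the $h$-generating function $H_D(z,x)$ from the $f$-generating function of the triangulation and matches Mallows's closed form; in both cases the $f$-polynomials of Theorem \ref{fpolys} are then \emph{extracted from} the $h$-polynomials, not the reverse. Your direction is legitimate since $h=f(x-1)$ is invertible, but it makes the $C_n$ and especially the two-part $D_n$ binomial identities carry the entire weight of the proof, and you assert rather than verify them. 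They are true but not routine; if you take this route you should actually execute at least the $D_n$ computation, or else adopt the paper's generating-function shortcut. Finally, the ``any unimodular triangulation'' clause in the statement rests on Theorem \ref{h*} (Betke--McMullen/Stanley), which guarantees that all unimodular triangulations share one $h$-polynomial; your argument as written only treats cones over boundary triangulations, so you should cite that result to close the statement as given.
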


%\ojo{Need a description where this theorem has appeared earlier.}

%%%%%%%%%%%%%%%%%%%%%%%%%%%%%%%%%%%%%%%%%%%%%%%%%%%%%%%%%%%%%%%%

\section{Finding growth series from unimodular triangulations}

Our proof of Theorem \ref{fpolys} is combinatorial, and now we show how to deduce Theorem \ref{maintheorem} from it.  
%how to get from a unimodular triangulation of a root polytope to the growth series of the corresponding root lattice, 
First we need some definitions. The \emph{$h^*$-polynomial} of a $d$-dimensional lattice polytope $P$ in a lattice $\L$ is defined by 
%$h^*_{P_{\M}}(t)$ of $P_{\M}$, which is defined by
\[
1 + \sum_{r > 0} |rP \cap \L|\,\,t^r = \frac{h^*_{P}(t)}{(1-t)^{d+1}}.
\]

\begin{theorem} \label{h*} 
\cite{betke, stanley80}
If $P$ is a $d$-dimensional lattice point configuration
% lattice polytope 
and $\Gamma$ is a unimodular triangulation of $P$, then $h^*_{\conv(P)}(t) = t^d \, h_{\Gamma}(\frac1t)$.
\end{theorem}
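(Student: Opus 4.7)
The plan is to compute the Ehrhart series of $\conv(P)$ by decomposing each dilate $r\cdot \conv(P)$ according to the relative interiors of the faces of $\Gamma$, and then to repackage the resulting generating function so that it matches $t^{d}\,h_\Gamma(1/t)$. The key input is the classical count for a unimodular simplex: if $\sigma$ is a unimodular $i$-simplex with respect to $\L$, then the relative interior of its dilate $r\sigma$ contains exactly $\binom{r-1}{i}$ lattice points for every integer $r \geq 1$. This reduces, via an affine change of coordinates sending $\sigma$ to the standard $i$-simplex and $\L \cap \aff(\sigma)$ to $\Z^{i}$, to counting strictly positive integer vectors with coordinate sum at most $r-1$.

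Because $\Gamma$ triangulates $\conv(P)$, the relative interiors of its nonempty faces partition $\conv(P)$; since every face of a unimodular simplex is itself unimodular with respect to the induced lattice on its affine span, summing the preceding count over all $i$-faces gives
\[
\bigl|r\cdot\conv(P)\cap\L\bigr| \;=\; \sum_{i=0}^{d} f_i \binom{r-1}{i} \qquad (r \geq 1),
\]
where $f_i$ denotes the number of $i$-dimensional faces of $\Gamma$. At $r = 0$ the right-hand side evaluates, via $\binom{-1}{i}=(-1)^i$, to the Euler characteristic $\sum_{i=0}^{d} (-1)^i f_i$ of the underlying $d$-ball, which equals $1$; so the identity is uniform in $r \geq 0$. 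Next I would pass to generating functions using $\sum_{r\geq 1} \binom{r-1}{i}\, t^r = t^{i+1}/(1-t)^{i+1}$. Clearing the denominator $(1-t)^{d+1}$ and absorbing the leading $1$ as the $i = -1$ term with $f_{-1}=1$ yields
\[
h^*_{\conv(P)}(t) \;=\; \sum_{i=-1}^{d} f_i\, t^{i+1}\,(1-t)^{d-i}.
\]
To finish, I would substitute $x = 1/t$ into $h_\Gamma(x) = \sum_{i=-1}^{d} f_i (x-1)^{d-i}$, use $(1/t - 1)^{d-i} = (1-t)^{d-i}/t^{d-i}$ to pull a common power of $t$ out of each summand, and recognize the result, after multiplying through by the appropriate overall power of $t$, as exactly the expression displayed above.

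The main delicate point is the bookkeeping at $r = 0$, or equivalently the correct treatment of the $f_{-1}=1$ term on both sides of the identity; all other manipulations are routine generating-function algebra that uses nothing about $P$ or $\L$ beyond the unimodularity of every maximal simplex of $\Gamma$ and the fact that unimodularity is inherited by faces.
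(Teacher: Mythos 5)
The paper does not prove this statement at all --- it is quoted as a known result of Betke--McMullen and Stanley with a citation --- so there is no internal proof to compare against. Your argument is the standard proof of that classical fact, and it is essentially correct: the lattice-point count $\binom{r-1}{i}$ for the relative interior of the $r$-th dilate of a unimodular $i$-simplex is right, the relative interiors of the nonempty faces of $\Gamma$ do partition $\conv(P)$ (and unimodularity passes to faces), the $r=0$ bookkeeping via $\binom{-1}{i}=(-1)^i$ and the Euler characteristic of a ball is handled correctly, and the generating-function identity $\sum_{r\ge1}\binom{r-1}{i}t^r = t^{i+1}/(1-t)^{i+1}$ gives exactly
\[
h^*_{\conv(P)}(t)\;=\;\sum_{i=-1}^{d} f_i\, t^{i+1}(1-t)^{d-i}.
\]

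The one point you should not leave vague is the final ``appropriate overall power of $t$.'' With the paper's definition $h_\Gamma(x)=\sum_{i=-1}^{d}f_i(x-1)^{d-i}$ (a polynomial of degree $d+1$, since the $f_{-1}$ term contributes $(x-1)^{d+1}$), your computation yields
\[
t^{d+1}\,h_\Gamma\!\left(\tfrac1t\right)\;=\;\sum_{i=-1}^{d} f_i\,t^{i+1}(1-t)^{d-i}\;=\;h^*_{\conv(P)}(t),
\]
i.e.\ the correct exponent is $d+1$, not $d$. (A sanity check with $P=\{0,1\}\subset\Z$: here $h_\Gamma(x)=x^2$ and $h^*=1=t^2h_\Gamma(1/t)$, while $t^1h_\Gamma(1/t)=1/t$.) This is a normalization mismatch between the theorem as printed and the paper's slightly nonstandard definition of $h_\Gamma$ --- with the more common convention in which the $h$-polynomial of a triangulation of a $d$-polytope has degree $d$, the stated exponent $d$ is the right one --- so it is not a flaw in your argument. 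But a complete proof must carry out that last substitution explicitly and state which convention makes the exponent come out as claimed, rather than asserting that some power of $t$ works.
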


For $\M=\M_{A_n}, \M_{C_n}, \M_{D_n}$, we will construct an explicit unimodular triangulation $\Gamma_{\M}$ of the boundary of the root polytope $P_{\M}$ (and of the point configuration $\M \cup 0$ by coning through the origin; this operation doesn't affect the $h$-vector). The $h$-polynomial of $\Gamma_\M$ will give us the $h^*$-polynomial of $P_\M$, which equals the coordinator polynomial of $\L_\M$ since $P_\M$ has a unimodular triangulation.

\medskip

The point of view of monoid algebras will also be useful. 
%We now give a brief outline of our strategy to prove Theorem \ref{maintheorem}. 
We start with the 
$(n\times r)$-matrix $M=(\a_1, \a_2,\dots,\a_r)$ whose columns generate $\Lat_M \subset \R^n$ 
as a monoid. We define $\P_M$ to be the convex hull of these generators, namely the polytope
\[
\P_{M} = \conv \{\a_1, \ldots, \a_r\} = \left\{\p \in \R^n : \p = \sum_{i=1}^r \lambda_i \a_i, \,
  \lambda_i \geq 0 \,\,\text{and} \,\,\sum_{i=1}^r \lambda_i = 1\right\}.
\] 
In the cases we study, 
the polytope $\P_M$ has $\a_1, \ldots, \a_r$ and the origin as its only lattice points. Moreover, the 
origin is the unique interior lattice point. Motivated by this  we let
\[M' = \left (
    \begin{array}{ccccc}
         1 & 1 & \dots & 1 & 1\\
     \a_1 & \a_2 & \dots & \a_r & 0\\
    \end{array}
  \right ).\] 
We also define  $\S(M') \subset \R^{n+1}$ as the monoid generated by the columns of $M'$. This monoid
is contained in the rational polyhedral cone 
\[ \cone(M') = \left\{ \left( \begin{array}{c} k \\ \u \end{array} \right) \in \R^{n+1} :
 \left( \begin{array}{c} k \\ \u \end{array} \right) = \sum_{i=1}^r
\lambda_i  \left( \begin{array}{c} 1 \\ \a_i \end{array} \right) + 
\lambda_{r+1}  \left( \begin{array}{c} 1 \\ \0 \end{array} \right)   \,\,\text{and} \,\, \lambda_1, \ldots, \lambda_{r+1}  \geq 0 \right\}. 
\]
Note that for any $k \in \R_{>0}$ we have $\left\{ \u \in \R^n : (k, \u) \in \cone(M')\right\} = k\P_M$.  In general, $\S(M') \subset \cone(M') \cap
\Z^{n+1}$, and if the two sets are equal we call $\S(M')$ \emph{normal}. The monoid $\S(M')$ is normal 
if and only if $\left\{ \u \in \Z^n : (k, \u) \in \cone(M')\right\} = k\P_M \cap \Z^n$ for all $k \in \Z_{> 0}$.

Now let $K$ be any field and let
$K[\x]=K[x_1,\dots, x_r,x_{r+1}]$ be the ring of polynomials with
coefficients in $K$. A monomial of $K[\x]$
is a product of powers of variables, $\x^\u=x_1^{u_1}x_2^{u_2}\dots
x_{r+1}^{u_{r+1}}$ where $\u=(u_1,u_2,\dots ,u_{r+1})$ is the
exponent vector. Similarly, we let $T=K[s, t_1,t_1^{-1},\dots
,t_n,t_n^{-1}]$ be the 
Laurent polynomial ring where the monomials can have exponent vectors with
negative coordinates (except in the first position).  The monoid algebra
$K[M'] = K[s\t^{\a_1},s\t^{\a_2},\ldots, s\t^{\a_r},s]$ is the subalgebra of $T$ consisting of 
$K$-linear combinations of monomials $s^k\t^\u$ where $(k,\u) \in \S(M')$. The ring homomorphism 
$\psi : K[\x]  \rightarrow T$ defined by 
$\psi (x_i)= s\t^{\a_i}$ for $i=1,\ldots, r$ and $\psi(x_{r+1}) = s$ is a surjection onto $K[M']$. 
 Therefore $K[\x]/I_{M'} \cong K[M']$ where $I_{M'}$, known as the \emph{toric ideal} of $M'$,  is the
kernel of $\psi$.  The monoid algebra $K[M']$ is graded where the degree of the
monomial $s^k \t^{\u}$ is ~$k$. The toric ideal $I_{M'}$ is homogenous with respect to the same grading.

\begin{definition}
The \emph{Hilbert series} of $K[M']$ is the generating function
\[H(K[M'];x):= \sum_{k \geq 0} \dim(K[M']_k) \, x^k,\]
where $K[M']_k$ is the $K$-vector space
 of the monomials in $K[M']$ of degree $k$.
\end{definition}

The following theorem is a standard result from algebraic geometry.

\begin{theorem}\cite{eisenbud}\label{hilbertgrowth}
The Hilbert series  of $K[M']$ can be written as
\[
H(K[M']; x) \, = \, \frac{h(x)}{(1-x)^{d+1}},
\]
where $h(x)$, the \emph{$h$-polynomial} of $K[M']$, is a polynomial of degree at most $d=\mathrm{rank}(M)$.
\end{theorem}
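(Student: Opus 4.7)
The plan is to invoke the classical Hilbert--Serre rationality theorem, then pin down the exponent in the denominator using the Krull dimension of $K[M']$, and finally read off the degree bound on the numerator from Ehrhart theory. As a first step, $K[M']$ is a standard graded $K$-algebra: its $r+1$ generators $s\t^{\a_1},\dots,s\t^{\a_r},s$ all lie in degree $1$, so Hilbert--Serre yields
\[
H(K[M'];x) \;=\; \frac{P(x)}{(1-x)^{r+1}}
\]
for some $P(x)\in\Z[x]$.

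Next, I would identify the Krull dimension of $K[M']$ as $d+1$. Since $K[M']$ is the coordinate ring of the affine toric variety associated to $\cone(M')$, its dimension equals the rank of the matrix $M'$; adjoining the top row of ones together with the column $(1,\0)^T$ to the rank-$d$ matrix $M$ raises the rank by exactly one, so $\dim K[M']=d+1$. Because the order of the pole at $x=1$ of a graded algebra's Hilbert series equals its Krull dimension, cancelling common $(1-x)$-factors from the expression above produces
\[
H(K[M'];x) \;=\; \frac{h(x)}{(1-x)^{d+1}}
\]
with $h(1) \ne 0$.

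Finally, for the degree bound $\deg h \le d$, I would appeal to the Ehrhart interpretation of the graded pieces: when $\S(M')$ is normal --- as it is for the monoid algebras arising in this paper, which come from polytopes admitting unimodular triangulations --- the $k$-th graded piece of $K[M']$ is in bijection with $k\P_M \cap \Z^n$, so $H_k$ coincides with the Ehrhart polynomial of $\P_M$ and $h(x)$ equals the $h^*$-polynomial of $\P_M$. Standard Ehrhart theory then bounds $\deg h^*_{\P_M}$ by $\dim \P_M = d$, consistent with Theorem~\ref{h*}. The main obstacle is precisely this last step: rationality and the correct denominator $(1-x)^{d+1}$ follow essentially formally from Hilbert--Serre plus the rank computation, whereas the numerator degree bound genuinely exploits the interpretation of $K[M']$ as the Ehrhart ring of a lattice polytope.
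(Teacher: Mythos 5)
The paper does not actually prove this theorem --- it is stated as a standard fact with a citation to Eisenbud --- so there is no internal proof to compare against; your write-up is a reconstruction, and it is essentially correct in the generality in which the paper uses the result. The first two steps (Hilbert--Serre rationality for an algebra generated in degree $1$, and the identification of the pole order at $x=1$ with the Krull dimension $\operatorname{rank}(M')=d+1$) are exactly the standard argument. Your instinct that the degree bound $\deg h\le d$ is the genuinely non-formal part is also right --- indeed more right than the paper's own phrasing suggests. As stated, for an arbitrary finite set of monoid generators of $\L_M$, the bound is \emph{false}: take $n=1$ and $M=(2,-3)$, which generates $\Z$ as a monoid. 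A short computation gives growth function $\growth(k)=k+1$ for $0\le k\le 4$ and $\growth(k)=5$ for $k\ge 4$, hence
\[
H(K[M'];x)=\frac{1+x+x^2+x^3+x^4}{(1-x)^{2}},
\]
whose numerator has degree $4>d=1$. So an extra hypothesis such as the normality you invoke (guaranteed here by the existence of a unimodular triangulation of $\M\cup\0$, which the paper establishes for $A_n$, $C_n$, $D_n$) is not merely convenient but necessary, and your proof is correct exactly for the monoid algebras to which the paper applies the theorem.

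Two small points of hygiene. First, for $C_n$ and $D_n$ the lattice $\L_M$ is a proper sublattice of $\Z^n$, so under normality the $k$-th graded piece is in bijection with $k\P_M\cap\L_M$ rather than $k\P_M\cap\Z^n$; Stanley's degree and nonnegativity theorem for the $h^*$-polynomial applies verbatim with respect to $\L_M$, so nothing breaks, but the lattice should be named correctly (the paper makes the same slip in its normality discussion). Second, $K[M']$ need not be normal in general, so calling it ``the coordinate ring of the affine toric variety associated to $\cone(M')$'' is a slight abuse; what you actually need is that the Krull dimension of any affine semigroup ring equals the rank of the group generated by the semigroup, which is $d+1$ here. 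Neither point affects the validity of your argument.
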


By our construction, the set of vectors $\{u : s^k \t^{\u}  \in
K[M']\}$ are in bijection with the set of vectors in $\Lat_M$ with word length
at most  $k$; that is,
\[\dim(K[M']_k) = \sum_{i=0}^k \growth(i).\] 
This observation gives us the following.

\begin{proposition}\label{prop:hilbert}
The $h$-polynomial of $K[M']$ is precisely the coordinator
polynomial of the growth series of $\Lat_M$.
\end{proposition}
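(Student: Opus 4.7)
The plan is to directly compare the two rational generating functions by using the identity $\dim(K[M']_k) = \sum_{i=0}^{k} \growth(i)$ already provided in the text. The proposition then reduces to the elementary observation that summing up coefficients of a power series corresponds to multiplying by $1/(1-x)$, which will convert the growth series into the Hilbert series and show that the two $h$-polynomials agree.

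More concretely, I would proceed as follows. First, I would justify the identity $\dim(K[M']_k) = \sum_{i=0}^{k} \growth(i)$. The degree-$k$ monomials of $K[M']$ are the $s^k \t^{\u}$ with $(k,\u) \in \S(M')$, and by construction of $M'$ (appending the all-ones row and the origin column) a lattice point $\u$ appears at level $k$ precisely when $\u = \sum c_i \a_i$ with $\sum c_i \leq k$ and $c_i \in \Z_{\geq 0}$; that is, exactly when $\wordlength(\u) \leq k$. Hence the number of degree-$k$ monomials of $K[M']$ equals the number of elements of $\Lat_M$ with word length at most $k$, which is $\sum_{i=0}^{k} \growth(i)$.

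Next, I would translate this identity into a relation between generating functions. Writing $G(x) = \sum_{k \geq 0} \growth(k)\, x^k$ and using the standard partial-sum identity $\sum_{k \geq 0} \bigl(\sum_{i=0}^{k} a_i\bigr) x^k = \tfrac{1}{1-x} \sum_{k \geq 0} a_k x^k$, one obtains
\[
H(K[M'];x) \;=\; \frac{G(x)}{1-x}.
\]
By the definition of the coordinator polynomial, $G(x) = h(x)/(1-x)^d$ with $d = \mathrm{rank}(\Lat_M)$, so
\[
H(K[M'];x) \;=\; \frac{h(x)}{(1-x)^{d+1}}.
\]
Comparing this with the expression $H(K[M'];x) = \tilde h(x)/(1-x)^{d+1}$ given by Theorem~\ref{hilbertgrowth}, and noting that the representation of a rational function in the form $p(x)/(1-x)^{d+1}$ with $\deg p \leq d+1$ is unique, I conclude that the $h$-polynomial of $K[M']$ coincides with the coordinator polynomial of the growth series of $\Lat_M$.

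There is no real obstacle here: the argument is just the combination of a bookkeeping identity (translating the ``at most $k$'' count between $\Lat_M$ and $K[M']$) and the elementary fact that multiplication by $1/(1-x)$ takes a sequence to its sequence of partial sums. The only point that deserves a sentence of care is why $\sum c_i \leq k$ in $\Lat_M$ corresponds exactly to homogeneous degree $k$ in $K[M']$, which is precisely the reason for adjoining the extra generator $s$ (equivalently, the origin column in $M'$): it acts as a ``slack'' variable that raises the degree without changing the underlying lattice element.
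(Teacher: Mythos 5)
Your proposal is correct and matches the paper's argument: the paper derives this proposition directly from the observation $\dim(K[M']_k) = \sum_{i=0}^{k}\growth(i)$, and your write-up simply spells out the partial-sum/$(1-x)^{-1}$ step that the paper leaves implicit. No issues.
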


%So far we have established
It follows that computing the coordinator polynomials of 
$A_n$, $C_n$, and $D_n$ is equivalent to computing the $h$-polynomials of the corresponding monoid
algebras. We will use this point of view in Section \ref{sec:Cn}.

Finally, we show that the $h$-polynomial of $K[M']$ is essentially the $h$-polynomial of any unimodular
triangulation of $\cone(M')$. 
\begin{theorem} Let $\Gamma$ be a unimodular triangulation of $\cone(M')$, and denote by  $K[\Gamma]$  the Stanley-Reisner ring of $\Gamma$ as a simplicial complex. Then 
$H(K[M']; x) =  H(K[\Gamma]; x)$; hence, the $h$-polynomial of $K[M']$ is equal to $t^d h_\Gamma(\frac{1}{t})$.
\end{theorem}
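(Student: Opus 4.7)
The plan is to establish the identity of Hilbert series by exhibiting a grading-preserving bijection between natural monomial $K$-bases of $K[M']$ and $K[\Gamma]$, and then to deduce the statement about $h$-polynomials from the standard face-counting formula for a Stanley--Reisner ring.

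First I would make both sides explicit. The ring $K[M']$ has the $K$-basis $\{ s^k \t^{\u} : (k,\u) \in \S(M') \}$, graded by $k$. Since $\Gamma$ is a unimodular triangulation of $\cone(M')$ whose rays $\m'_1,\ldots,\m'_{r+1}$ are the columns of $M'$, every lattice point of the cone is a nonnegative integer combination of these columns, and so $\S(M') = \cone(M') \cap \Z^{n+1}$. Viewing $\Gamma$ as an abstract simplicial complex on the vertex set $\{1,\ldots,r+1\}$, the Stanley--Reisner ring $K[\Gamma] = K[x_1,\ldots,x_{r+1}]/I_\Gamma$ (graded by total degree) has the $K$-basis consisting of monomials $\x^{\mathbf{c}}$ whose support $\{i : c_i > 0\}$ is a face of $\Gamma$.

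Next I would build the bijection. Each point of $\cone(M')$ lies in the relative interior of a unique face $\sigma \in \Gamma$. For a lattice point $(k,\u)$, unimodularity of $\sigma$ means that the ray generators $\{\m'_i : i \in \sigma\}$ form a $\Z$-basis of $\aff(\sigma) \cap \Z^{n+1}$, so $(k,\u)$ has a unique expansion $(k,\u) = \sum_{i \in \sigma} c_i\, \m'_i$ with every $c_i \in \Z_{>0}$ (taking $\sigma = \emptyset$, $\mathbf{c} = \0$ when $(k,\u) = \0$). The rule $s^k \t^{\u} \mapsto \x^{\mathbf{c}}$ is then a bijection between the two monomial bases; because every column of $M'$ has first coordinate $1$, the degrees match via $k = \sum_i c_i$. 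Consequently $H(K[M'];x) = H(K[\Gamma];x)$.

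For the remaining assertion I would apply the standard face-by-face decomposition
$$H(K[\Gamma];t) \;=\; \sum_{\sigma \in \Gamma} \Bigl(\tfrac{t}{1-t}\Bigr)^{|\sigma|} \;=\; \frac{t^{d}\, h_\Gamma(1/t)}{(1-t)^{d+1}},$$
where the last equality comes from clearing denominators and rearranging in terms of the paper's $h_\Gamma$. Matching this against $H(K[M'];x) = h(x)/(1-x)^{d+1}$ from Theorem \ref{hilbertgrowth} identifies the $h$-polynomial of $K[M']$ with $t^{d} h_\Gamma(1/t)$.

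The main obstacle is the uniqueness assertion driving the bijection: that each lattice point of $\cone(M')$ has a canonical minimal supporting face in $\Gamma$ and a unique positive-integer expansion on its ray generators. Both facts hinge on the partition of $\cone(M')$ into the relative interiors of its simplicial subcones, combined with the $\Z$-basis property of a unimodular simplex. Every other step is a routine manipulation of generating functions.
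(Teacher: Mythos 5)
Your proof is correct and is essentially the paper's argument: your grading-preserving bijection (unique positive-integer expansion of each lattice point on the rays of its minimal supporting face) is exactly the paper's disjoint-union decomposition of the monoid $\S(M')$ over the faces $\sigma\in\Gamma$, and both arguments then conclude via the identity $H(K[\Gamma];x)=\sum_{\sigma\in\Gamma}\bigl(\tfrac{x}{1-x}\bigr)^{|\sigma|}$. Your version is, if anything, slightly more explicit about why unimodularity gives $\S(M')=\cone(M')\cap\Z^{n+1}$ and why the degrees match.
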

\begin{proof} Since $\Gamma$ is unimodular, the monoid $\S(M')$ is the disjoint union of all sets of the form
\[ \left\{ \left( \begin{array}{c} k \\ \u \end{array} \right) +\sum_{i\in \sigma}
n_i  \left( \begin{array}{c} 1 \\ \a_i \end{array} \right)   
%\,\,\text{and} \,\, 
\,\,: \,\,
n_i \in \N \right\},
\]
where $\sigma \subset \{1,\ldots, r+1\}$ is a face of $\Gamma$ with $|\sigma| = k$ and $\u = \sum_{i \in \sigma} \a_i$.
This means that 
\[ H(K[M']; x) = \sum_{\sigma \in \Gamma} \frac{x^{|\sigma|}}{(1-x)^{|\sigma|}}, \]
and the above expression is precisely $H( K[\Gamma]; x)$.
\end{proof}
%%%%%%%%%%%%%%%%%%%%%%%%%%%%%%%%%%%%%%%%%%%%%%%%%%%%%%%%%%%%%%%%%%%%%%%%%%%%%%%

\section{The Lattice $A_n$}

We now take a closer look at the root lattice $A_{n}$; it
%The root lattice $A_{n}$ 
is the subgroup of $\Z^{n+1}$ given by $A_{n} =
\left\{\x\in \Z^{n+1}\,|\, \sum_{i=0}^{n} x_i = 0 \right\}.$

\begin{proposition}
  The lattice $A_{n}$ is generated as a monoid by the root system of the Coxeter group $A_{n}$; that is, the set of vectors $\M_{A_{n}} = \{\e_i-\e_j \, : \, 0 \leq i \neq j \leq n+1\}$. % \hfill$\Box$
\end{proposition}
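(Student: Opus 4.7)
The plan is to verify both inclusions between $A_n$ and the monoid generated by $\M_{A_n}$. For the easy direction, every generator $\e_i-\e_j$ has coordinate sum $1-1=0$, so it lies in $A_n$; since $A_n$ is closed under nonnegative integer combinations (being an additive subgroup), the monoid generated by $\M_{A_n}$ is contained in $A_n$.

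For the reverse inclusion, I would proceed by induction on $\|\x\|_1 := \sum_{i=0}^n |x_i|$ for $\x \in A_n$. The base case $\|\x\|_1 = 0$ forces $\x = \0$, which is the empty combination. For the inductive step, assume $\x \ne \0$. Because $\sum_{i=0}^n x_i = 0$ and $\x$ is not the zero vector, at least one coordinate must be strictly positive and at least one strictly negative; choose indices $i$ and $j$ with $x_i > 0$ and $x_j < 0$. Then $\x' := \x - (\e_i - \e_j)$ still belongs to $A_n$, and its coordinates satisfy $|x'_i| = x_i - 1 < |x_i|$ and $|x'_j| = |x_j| - 1 < |x_j|$, while the other coordinates are unchanged; hence $\|\x'\|_1 = \|\x\|_1 - 2$. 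By the inductive hypothesis, $\x'$ is a nonnegative integer combination of elements of $\M_{A_n}$, and so is $\x = \x' + (\e_i - \e_j)$.

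An equivalent and more explicit way to package the argument is to decompose $\x = \x^+ - \x^-$ into its positive and negative parts, where $x_i^+ = \max(x_i,0)$ and $x_i^- = \max(-x_i,0)$. The hypothesis $\sum_i x_i = 0$ yields $\sum_i x_i^+ = \sum_i x_i^- =: N$. Listing the positive indices with multiplicity as $i_1,\dots,i_N$ and the negative indices with multiplicity as $j_1,\dots,j_N$ (in any order), one obtains the explicit representation
\[
\x \;=\; \sum_{k=1}^N (\e_{i_k} - \e_{j_k}),
\]
which exhibits $\x$ as a nonnegative integer combination of $N$ elements of $\M_{A_n}$.

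There is no real obstacle here: the only substantive observation is that a zero-sum integer vector has positive and negative parts of equal total magnitude, which is exactly what permits the pairing. As a side benefit, the argument shows that the word length $\wordlength(\x)$ with respect to $\M_{A_n}$ equals $\tfrac{1}{2}\|\x\|_1$, since each generator contributes $2$ to $\|\cdot\|_1$ and the decomposition above achieves this bound.
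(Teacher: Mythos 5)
Your proof is correct and follows essentially the same route as the paper's: both arguments pick a positive coordinate $x_i$ and a negative coordinate $x_j$ of a nonzero $\x \in A_n$ (which must exist since the coordinates sum to zero) and subtract $\e_i - \e_j$ to strictly decrease $\sum |x_i|$, iterating until reaching $\0$. Your additional packaging via positive and negative parts, and the observation that the word length equals $\tfrac12\sum|x_i|$, are correct refinements but not a different method.
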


\begin{proof}
Define $|\x| = \sum |x_i|$. Any $\x \in A_n$ with $|\x|>0$ must have a positive entry $x_i$ and a negative entry $x_j$. Subtracting $\e_i-\e_j$ from $\x$ gives the vector $\y = \x - (\e_i-\e_j)$, which is also in $A_n$ and satisfies $|\y| < |\x|$. Iterating this process yields a way to write $\x$ as a non-negative integer combination of $\M_{A_n}$.
\end{proof}

The polytope $\P_{\M_{A_n}}$ is the root polytope of the lattice $A_n$, and we will denote this polytope 
by $\P_{A_n}$. Each root $\e_i-\e_j$ is a vertex of $\P_{A_n}$ since it uniquely maximizes the functional $x_i-x_j$. It will be convenient to let 
$\v_{ij}=\e_i - \e_j$ and organize these vectors in the $(n+1) \times (n+1)$
matrix~$\V_{n}$ whose entries are $\v_{ij}$ for $i \neq j$ and 0 if
$i=j$.

\begin{example}
The root polytope $\P_{A_3}$ %from Example \ref{PA4} 
can be written
as $\conv(\V_3)$, the convex hull of the entries of
\[\V_{3}={\scriptsize \left(
                    \begin{array}{cccc}
                      0 & \v_{01} & \v_{02} & \v_{03}\\
                \v_{10} & 0 & \v_{12} & \v_{13}\\
                   \v_{20} & \v_{21} & 0 & \v_{23}\\
                     \v_{30} & \v_{31} & \v_{32} & 0 \\
                                     \end{array}
                  \right)}.
\]

The root polytope $\P_{A_3}$ can be obtained by joining the midpoints of the edges of a cube, as shown in Figure \ref{rootA3}. To see this, let $\a_1 =(\frac32, -\frac12, -\frac12, -\frac12)$ and define $\a_2, \a_3, \a_4$ analogously; these are the vertices of a regular tetrahedron centered at the origin which lies on the hyperplane $x_1+x_2+x_3+x_4= 0$ of $\R^4$. These vectors, together with their negatives, are the vertices of a $3$-cube. The midpoints of the edges of this cube are the vectors $\frac12(\a_i-\a_j) = \v_{ij}$. In the diagram, $i$, $-i$, and $ij$ represent $\a_i$, $-\a_i$, and $\v_{ij}$, respectively.\end{example}

\begin{center}
\begin{figure}[htbp]
\centering
\includegraphics[scale=.65]{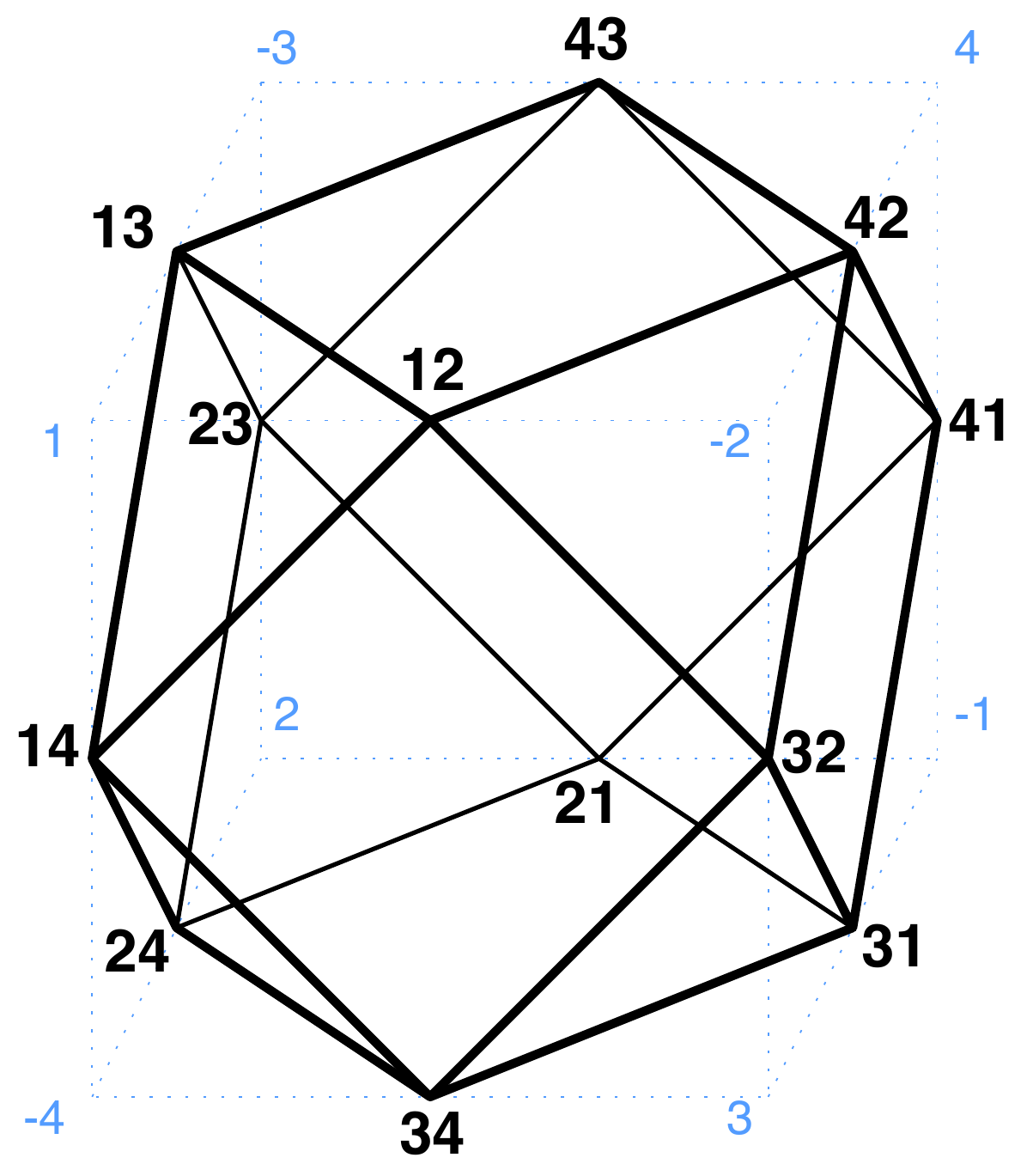}
\caption{The root polytope $\P_{A_3}$.}
\label{rootA3}
\end{figure}
\end{center}

For a finite set $A$ let $\Delta_A$ be the standard $(|A|-1)$-dimensional simplex whose vertices are
the unit vectors in $\R^{|A|}$. The following proposition summarizes several useful properties of the root polytope $P_{A_n}$.

\begin{proposition}\label{hyperplanedesc}
The polytope  $\P_{A_n}$ is an $n$-dimensional polytope in $\R^{n+1}$ which is contained in the hyperplane $H_0 = \{\x \in \R^{n+1} :\sum_{i=0}^{n} x_i =0\}$. 
It has $(n-1)n(n+1)$ edges, which are of the form
  $\v_{ij}\v_{ik}$ and $\v_{ik}\v_{jk}$ for $i,j,k$ distinct. 
  It has $2^{n+1}-2$
  facets, which can be labelled by the proper subsets $S$ of $[0,n] := \{0,1,\ldots, n\}$. The facet $\F_S$ 
is defined by the hyperplane
  \[
     H_S := \left\{\x \in \R^{n+1} : \,\,\sum_{i\in S} x_{i} = 1
     \right\},
  \] 
and it is congruent to
the product of simplices $\Delta_S \times \Delta_T$, where $T =
[0,n]-S$. 
The only lattice points in $\P_{A_n}$ are its vertices and the origin.

\end{proposition}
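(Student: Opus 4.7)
The plan is to prove each assertion in turn, resting on the single computation that for any $S \subseteq [0,n]$ the linear functional $\ell_S(\x) := \sum_{k \in S} x_k$ takes the value $\mathbbm{1}[i \in S] - \mathbbm{1}[j \in S] \in \{-1,0,1\}$ on the root $\v_{ij}$, attaining $+1$ exactly when $i \in S$ and $j \in T := [0,n] \setminus S$. Since $\ell_{[0,n]}(\v_{ij}) = 0$ always, we have $\P_{A_n} \subset H_0$, and the $n$ roots $\v_{01}, \v_{02}, \ldots, \v_{0n}$ are linearly independent, so $\dim \P_{A_n} = n$.

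For the facet description, the computation above shows $H_S$ is a supporting hyperplane of $\P_{A_n}$ for each proper nonempty $S \subsetneq [0,n]$, with $\F_S := \P_{A_n} \cap H_S = \conv\{\v_{ab} : a \in S, b \in T\}$. To identify $\F_S$ with $\Delta_S \times \Delta_T$, I would use the affine map
\[
\Phi \colon \R^S \times \R^T \longrightarrow \R^{n+1}, \qquad (\a, \b) \longmapsto \a - \b,
\]
where $\R^S$ and $\R^T$ are embedded as the coordinate subspaces of $\R^{n+1}$ indexed by $S$ and $T$. Then $\Phi(\e_a, \e_b) = \v_{ab}$, so $\Phi(\Delta_S \times \Delta_T) = \F_S$; since $\a$ and $\b$ have disjoint supports $\Phi$ is injective on the product, and for any tangent vectors $(d\a, d\b)$ to it one has $|d\a - d\b|^2 = |d\a|^2 + |d\b|^2$, so $\Phi$ is an isometry. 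To see that no further facets arise, I would analyze an arbitrary supporting hyperplane $\ell(\x) = \sum c_i x_i$: its maximum over the $\v_{ab}$ is attained on $\{\v_{ab} : a \in A,\, b \in B\}$ with $A = \arg\max_i c_i$ and $B = \arg\min_i c_i$, a face congruent to $\Delta_A \times \Delta_B$ of dimension $|A| + |B| - 2$. This equals $n-1$ only when $(A,B)$ is an ordered two-block partition of $[0,n]$, recovering exactly the $2^{n+1}-2$ facets $\F_S$.

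The edge count then follows from the product-of-simplices structure: the edges of $\Delta_S \times \Delta_T$ pair an edge of one factor with a vertex of the other, and pushing forward via $\Phi$ gives edges of the form $\{\v_{ab}, \v_{a'b}\}$ and $\{\v_{ab}, \v_{ab'}\}$. A direct enumeration over ordered triples $(i,j,k)$ of distinct indices in $[0,n]$ yields $(n-1)n(n+1)$ edges. For the lattice point claim, $\P_{A_n} \subseteq [-1,1]^{n+1}$ forces any lattice point $\x$ to have entries in $\{-1,0,1\}$ with $\sum x_i = 0$; the inequality $\ell_{\{i,j\}}(\x) \leq 1$ rules out two entries equal to $+1$, while $\ell_{[0,n] \setminus \{i,j\}}(\x) \leq 1$ rules out two entries equal to $-1$, leaving $\x \in \{\0\} \cup \M_{A_n}$.

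The main obstacle is verifying completeness of the facet list through the analysis of arbitrary linear functionals on $H_0$, together with handling the embedding of $\R^S$ and $\R^T$ into $\R^{n+1}$ carefully enough to conclude the genuine congruence $\F_S \cong \Delta_S \times \Delta_T$ rather than merely an affine isomorphism. Once those two points are established, the dimension, edge count, and lattice point assertions follow directly.
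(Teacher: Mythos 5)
Your proposal is correct, and its logical architecture is essentially the reverse of the paper's. The paper first pins down the edges via the parallelogram identities $\v_{ij}+\v_{kl}=\v_{il}+\v_{kj}$ and $\v_{ij}+\v_{ki}=\v_{lj}+\v_{kl}$ (which show that no linear functional can be maximized exactly on such a pair of vertices), and only then uses the fact that every edge of a facet has direction $\e_i-\e_j$ to force an arbitrary facet-defining functional to take just two values on $\e_0,\ldots,\e_n$. You instead classify \emph{all} proper faces in one stroke: the face of $\P_{A_n}$ selected by $\sum_i c_i x_i$ is $\conv\{\v_{ab}: a\in A,\ b\in B\}\cong\Delta_A\times\Delta_B$ with $A=\arg\max_i c_i$, $B=\arg\min_i c_i$ (disjoint unless the functional is constant on $H_0$), so the facets are exactly the splittings $|A|+|B|=n+1$ and the edges drop out afterwards as the faces of the facets, rather than being needed as an input. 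This is a clean and complete route that yields the whole face lattice, and it is every bit as rigorous since $\ell(\v_{ab})=c_a-c_b$ makes the maximizing set transparent. Two further points where you differ: your explicit map $\Phi(\a,\b)=\a-\b$ with the disjoint-support computation $|d\a-d\b|^2=|d\a|^2+|d\b|^2$ upgrades the paper's asserted ``isomorphism'' to the claimed congruence, and for the lattice points you use the bounding box $[-1,1]^{n+1}$ together with the inequalities $\ell_{\{i,j\}}\le 1$ and $\ell_{[0,n]\setminus\{i,j\}}\le 1$, where the paper instead observes that $\P_{A_n}$ lies in the ball of radius $\sqrt2$; both arguments are valid and of comparable length.
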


\begin{proof} The first statement is clear. The edges  $\v_{ij}\v_{ik}$ and
  $\v_{ik}\v_{jk}$ are maximized by the functionals $x_i-x_j-x_k$ and
  $x_i+x_j-x_k$, respectively. To see that we cannot have an edge whose
  vertices are $\v_{ij}$ and $\v_{kl}$ with $i,j,k,l$ distinct, note
  that $\v_{ij}+\v_{kl} = \v_{il}+\v_{kj}$; so any linear functional $f$ satisfies that
  $f(\v_{ij}) + f(\v_{kl}) = f(\v_{il}) +  f(\v_{kj})$ and cannot be
  maximized precisely at this presumed edge. Similarly, we cannot have an edge with vertices
  $\v_{ij}$ and $\v_{ki}$ since $\v_{ij}+\v_{ki} = \v_{lj} + \v_{kl}$ for
  any $l$ distinct from $i,j,k$.

The vertices of $\P_{A_n}$ that lie on $H_S$ are those of the form
$\v_{ij}$ for $i \in S$ and $j \notin T$; these clearly form a polytope
isomorphic to the product of simplices $\Delta_S \times \Delta_{T}$. Since
this polytope has codimension one in $\P_{A_n}$, it is indeed a facet. 
Now consider any facet $\F$ of $\P_{A_n}$ defined by a
functional $f$. By the previous paragraph, every edge of $\F$ has
direction of the form $\e_i-\e_j$ for some $i \neq j$; and the fact that this edge
lies on $\F$ implies that
$f(\e_i) = f(\e_j)$. Doing this for $n-1$ linearly independent edges of
$\F$, we get $n-1$ independent equalities among the $f(\e_i)$, showing that
there can only be two different values among $f(\e_0), \ldots, f(\e_n)$. Since
$\P_{A_n}$ lies on $H_0$ we can assume that the smaller value is $0$, and
by rescaling we can make the larger value $1$. It follows that $\F$ is
one of the facets already described.

Since $\P_{A_n}$ is contained in the sphere of radius $\sqrt{2}$ centered at $0$, it can only contain lattice points of the form $0, \pm \e_i$, or $\pm \e_i \pm \e_j$. Of these, the only ones on the hyperplane $H_0$ are the origin and the vertices of $\P_{A_n}$.
\end{proof}

Now we will construct a specific unimodular triangulation of $\P_{A_n}$ (equivalently, of $\cone(M_{A_n}')$). The combinatorial structure of this triangulation will allow us to enumerate its faces.

\begin{proposition}\cite{seashore}\label{uniproof}
  The matrix $M_{A_n}$  is totally unimodular; \emph{i.e.}, every
  square submatrix of~$M_{A_n}$ has determinant $0, \, +1$, or $-1$. The same is true for $M_{A_n}'$.
\end{proposition}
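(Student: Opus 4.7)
The plan is to prove total unimodularity of $M_{A_n}$ via the standard inductive argument for signed vertex--edge incidence matrices of directed graphs, and then to handle $M_{A_n}'$ by case analysis on whether the added row of $1$'s and the added column $\binom{1}{\0}$ lie in the submatrix under consideration.

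Each column of $M_{A_n}$ has the form $\e_i-\e_j$, with exactly one $+1$, one $-1$, and zeros elsewhere, so $M_{A_n}$ is the signed vertex--edge incidence matrix of the complete directed graph on $\{0,1,\dots,n+1\}$. Total unimodularity of such a matrix is classical and can be proved by induction on the size $k$ of a square submatrix $B$, with the base case $k=1$ being immediate. For the inductive step I would distinguish three cases based on the columns of $B$: (i) some column of $B$ is entirely zero, so $\det B = 0$; (ii) some column of $B$ has a single nonzero entry $\pm 1$, in which case Laplace expansion along that column yields $\det B = \pm \det B'$ for a smaller square submatrix $B'$ of $M_{A_n}$, and induction applies; and (iii) every column of $B$ carries both its $+1$ and its $-1$, so the rows of $B$ sum to the zero vector and $\det B = 0$.

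For $M_{A_n}'$ I would run the same three-case analysis on a square submatrix $B'$, additionally recording whether $B'$ contains the top row of $1$'s and the rightmost column $\binom{1}{\0}$. If the top row is absent, $B'$ is a submatrix of $M_{A_n}$ (possibly with an all-zero extra column, which only forces $\det B' = 0$), and we conclude by the first part. If both the top row and the rightmost column lie in $B'$, then that column has a single nonzero entry---the top $1$---so Laplace expansion along it reduces $\det B'$ to $\pm$ a determinant of a square submatrix of $M_{A_n}$, giving $\det B' \in \{0, \pm 1\}$.

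The main obstacle is the remaining case, where the top row appears in $B'$ but the rightmost column does not: the columns involved, of the form $\binom{1}{\e_i-\e_j}$, now carry three nonzero entries, and the directed-graph structure breaks down. The plan is to restore it by a unimodular change of coordinates that introduces a phantom vertex $\ast$ and turns each such column into an honest incidence column with exactly one $+1$ and one $-1$; once $M_{A_n}'$ has been reinterpreted as a submatrix of the signed vertex--edge incidence matrix of a directed graph on $n+3$ vertices, the three-case induction finishes the proof. A fallback, should this reinterpretation be too rigid, is to apply a unimodular column operation subtracting one fixed edge column from each of the others: this zeros out the top row in all but one column and reduces us once more to the Laplace-expansion case already handled.
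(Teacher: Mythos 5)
Your proof of the first assertion is correct and is the standard argument: $M_{A_n}$ is the signed vertex--edge incidence matrix of a complete digraph, and the three-case induction (zero column; column with a single nonzero entry, expand; every column carrying both its $+1$ and its $-1$, so the rows sum to zero) goes through. The paper itself only cites \cite{seashore} for this proposition and gives no proof, so your argument stands on its own there.

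The gap is in the second assertion, and it sits exactly in the case you flag as the ``main obstacle'': a square submatrix meeting the top row of $1$'s but not the column $\binom{1}{\0}$. Neither of your proposed repairs can close it, because that case genuinely fails: taking the rows of $M_{A_n}'$ indexed by the top row and by coordinate $0$, and the columns indexed by $\e_0-\e_1$ and $\e_1-\e_0$, gives the submatrix
\[
\begin{pmatrix} 1 & 1 \\ 1 & -1 \end{pmatrix}
\]
of determinant $-2$; so $M_{A_n}'$ is \emph{not} totally unimodular for any $n\ge 1$ (similar $2\times 2$ minors of determinant $\pm 2$ occur even if antipodal columns are discarded). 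This also explains why both of your strategies stall: a column $(1,\e_i-\e_j)$ has sign pattern $(+,+,-)$ and cannot be an incidence column of any digraph, and unimodular row or column operations preserve the column lattice and the maximal minors but not the determinants of arbitrary square submatrices, so they cannot transfer total unimodularity. What is true --- and is all that Corollary \ref{unimodAn} and the Hilbert series argument require --- is precisely the content of your ``both present'' case: any $n+1$ linearly independent columns of $M_{A_n}'$ that include $\binom{1}{\0}$ form a basis of the lattice $\Z\,M_{A_n}'=\Z\oplus A_n$, because Laplace expansion along that column reduces the relevant maximal minors to minors of $M_{A_n}$, which is totally unimodular. I would state and prove the second claim in that weaker (maximal-minor, lattice-basis) form rather than asserting total unimodularity of $M_{A_n}'$.
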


\begin{corollary}\label{unimodAn} 
Let $\T$ be an arbitrary triangulation of the boundary of $\P_{A_n}$. Coning over $\T$ from the origin gives rise to a unimodular triangulation of $\P_{A_n} \cup 0$.
\end{corollary}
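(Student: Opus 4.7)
The plan is to split the argument into two parts: first, verify that coning from $\0$ gives a genuine simplicial decomposition of $\P_{A_n}$ with vertex set $\M_{A_n} \cup \{\0\}$; and second, invoke the total unimodularity of $M_{A_n}'$ to conclude that every maximal simplex is unimodular.

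For the first part, I would use Proposition~\ref{hyperplanedesc}, which states that $\0$ is the unique interior lattice point of $\P_{A_n}$ and that the only lattice points of $\P_{A_n}$ are its vertices and the origin. Since $\0$ lies in the interior of $\P_{A_n}$, the collection $\{\conv(\sigma \cup \{\0\}) : \sigma \in \T\}$ covers $\P_{A_n}$, its simplices meet only along common faces (the faces of $\T$ cone to a compatible family), and its vertex set is precisely $\M_{A_n} \cup \{\0\}$. So this coning procedure does produce a bona fide triangulation of the lattice point configuration $\M_{A_n} \cup \{\0\}$.

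For the second part, I would pass to the lifted cone $\cone(M_{A_n}')$. Each maximal simplex of the coned triangulation corresponds to a choice of $n+1$ columns of $M_{A_n}'$: the column $\binom{1}{\0}$ together with $n$ columns $\binom{1}{\v_{ij}}$ indexed by the vertices of a maximal face of $\T$. The normalized volume of the resulting lattice simplex in $A_n$ equals the absolute value of the corresponding maximal minor of $M_{A_n}'$. By Proposition~\ref{uniproof}, this minor lies in $\{-1,0,+1\}$, and it is nonzero because the simplex is full-dimensional; hence it equals $\pm 1$, making the simplex unimodular.

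There is essentially no real obstacle: the content has been packaged into the total unimodularity statement of Proposition~\ref{uniproof}. The only bookkeeping required is to confirm that coning from an interior lattice point preserves the simplicial-complex structure, and that the normalized volume of a lifted lattice simplex equals the corresponding maximal minor of $M_{A_n}'$; both of these are standard facts about cones over lattice polytopes with $\0$ in the interior.
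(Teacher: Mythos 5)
Your proof is correct and is essentially the argument the paper intends: the corollary is stated as an immediate consequence of Proposition~\ref{uniproof} (total unimodularity of $M_{A_n}'$) together with the fact from Proposition~\ref{hyperplanedesc} that $\0$ is the unique interior lattice point, which is exactly the two-step packaging you give. The only cosmetic imprecision is that the lifted simplices correspond to $(n+1)$ columns of the $(n+2)\times$-row matrix $M_{A_n}'$, so one should speak of the (common, $\pm1$-valued) maximal minors of a non-square submatrix rather than a single determinant, but total unimodularity handles this in the standard way.
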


In light of Corollary \ref{unimodAn} we need a  triangulation of the boundary of $\P_{A_n}$. 
Such a triangulation will be obtained as a {\em pulling triangulation}, also known as a {\em reverse 
lexicographic triangulation}. %It is known that such a triangulation is always regular, see \cite[Proposition 8.6]{sturmfels}. 
Let $\A=\{a_1, \ldots, a_r\}$ be a collection of points in $\R^n$, totally ordered by
$a_1 \prec a_2 \prec \dots \prec a_r$, and let $\P_{\A}$ be the convex hull of $\A$. We define the 
faces of $\A$ to be the subsets of $\A$ that lie on the faces of the polytope $\P_{\A}$.
%
% I need to understand why this is a proposition not a definition
%
\begin{definition}
  A  pulling triangulation $\Gamma_{pull}(\A)$ is defined
  recursively as follows:
\begin{itemize}
  \item If $\A$ is affinely independent, then $\Gamma_{pull}(\A)
  = \{\A\}.$
  \item Otherwise, \[\Gamma_{pull}(\A) = \bigcup_\F
  \{\{a_1\} \cup \, \G : \G \in \Gamma_{pull} (\F)\}\] where the
  union is taken over all facets $\F$ of $\A$ not containing $a_1.$
\end{itemize}
\end{definition}

\begin{definition}
The staircase triangulation $\Gamma$ of $\P_{A_n}$ is the pulling triangulation of the set $\M_{A_n} \cup \0$ under the ordering
\[
\0 \prec \v_{01} \prec \v_{02} \prec \cdots \prec \v_{0,n} \prec \v_{10} \prec \v_{12} \prec \cdots \prec \v_{1,n} \prec \cdots \prec \v_{n,0} \prec \v_{n, n-1}
\]
\end{definition}

Since the  origin is pulled first, $\Gamma$ is the cone over a triangulation of the boundary of $\P_{A_n}$, and it suffices to understand how each facet $\Delta_S \times \Delta_T$ of $\P_{A_n}$ gets triangulated. Fortunately, the restriction of $\Gamma$ to this facet is the well-understood 
\emph{staircase triangulation} of a product of two simplices \cite{GKZ}. The vertices of $\Delta_S \times \Delta_T$ correspond to the entries of the $S \times T$ submatrix of the matrix $\V_n$, and the maximal simplices of the staircase triangulation of this facet correspond to the ``staircase" paths that go from the top left to the bottom right corner of this submatrix taking steps down and to the right.
We will let $\Gamma=\Gamma(\P_{A_n}), \partial \Gamma = \Gamma(\partial\P_{A_n}),$ and $\Gamma(\Delta_S \times \Delta_T)$ denote the staircase triangulation of $\P_{A_n}$, its restriction to the boundary $\partial \P_{A_n}$, and its restriction to the facet $\Delta_S \times \Delta_T$, respectively.

\begin{example} The facet $\F_{046} \cong \Delta_{046} \times \Delta_{1235}$ of the root polytope $\P_{A_6}$ is the convex hull of the vertices in the following submatrix of $\V_6$:
\[
\left(
  \begin{array}{cccc}
  \v_{01} & \v_{02} & \v_{03}& \v_{05}\\
    \v_{41} & \v_{42} & \v_{43} & \v_{45} \\
    \v_{61} & \v_{62} & \v_{63} & \v_{65}\\
      \end{array}
\right).
\] 
There are $10$ maximal cells in $\Gamma(\Delta_{046} \times \Delta_{1235})$, corresponding to the ${5 \choose 2} = 10$ staircase paths from $\v_{01}$ to $\v_{65}$. The simplex with vertices $\{\v_{01},\v_{02},\v_{42},\v_{43},\v_{45},\v_{65}\}$ is one of these cells; it corresponds to the staircase
\[
\left[
  \begin{array}{cccc}
  \v_{01} & \v_{02} & - & -\\
    - & \v_{42} & \v_{43} & \v_{45}     \\
  - & - & - & \v_{65}\\
      \end{array}
\right].
\] 
\end{example}

The facet $\Delta_S \times \Delta_T$ is subdivided into ${|S|+|T| -2 \choose |S|-1}$ simplices. It follows that the number of full-dimensional simplices of $\Gamma$ is
\[
f_n(\Gamma) = f_{n-1}(\partial\Gamma) = \sum_{k=1}^{n} {n+1 \choose k} {n-1 \choose k-1} = {2n \choose n}.
\]
The following gives a characterization of all faces of $\partial \Gamma$.

\begin{proposition}\label{mfaceconditions}
The $(m-1)$-dimensional faces of the staircase triangulation $\partial\Gamma$ 
are given by subsets $\{v_{i_1 j_1}, \ldots, v_{i_mj_m}\}$ of the set of vertices such that:
\begin{enumerate}
  \item $0 \leq i_1 \leq i_2 \leq \ldots\leq i_{m}\leq n$
  \text{and} $\,0\leq j_1 \leq j_2 \leq \ldots\leq j_{m}\leq n$,
\item $(i_s,j_s) \neq (i_t,j_t)$ for $s \neq t$, and
  \item $i_s\neq j_t,$\,\,$1\leq s,t\leq m$.
\end{enumerate}
\end{proposition}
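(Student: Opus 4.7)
The plan is to reduce the question to the combinatorics of staircase triangulations of products of simplices, using the facet structure of $\P_{A_n}$ from Proposition \ref{hyperplanedesc}. The three conditions in the statement correspond to structural features of $\partial\Gamma$: condition~(3) records which facet $\F_S$ the face sits on, while conditions (1) and (2) say that its vertices form a chain in a grid partial order.

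Every face of $\partial\Gamma$ is contained in some maximal cell, and each maximal cell is a top-dimensional simplex of the staircase triangulation $\Gamma(\Delta_S\times\Delta_T)$ of some facet $\F_S$, with $T=[0,n]\setminus S$. Since the vertices of $\F_S$ are exactly $\{\v_{ij}: i\in S,\, j\in T\}$, any face $\{\v_{i_1j_1},\dots,\v_{i_mj_m}\}$ lying in $\F_S$ must satisfy $i_s\in S$ and $j_t\in T$ for all $s,t$, so condition~(3) follows from $S\cap T=\emptyset$. Conversely, if (3) holds, then any $S$ containing all of the $i_s$ and disjoint from all of the $j_t$ places the set inside $\F_S$; thus condition~(3) is equivalent to $\{\v_{i_sj_s}\}$ lying on the vertex set of some facet of $\P_{A_n}$.

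Having reduced to a single facet, I identify each vertex $\v_{ij}$ of $\F_S$ with the grid point $(i,j)\in S\times T$. The maximal simplices of $\Gamma(\Delta_S\times\Delta_T)$ correspond to monotone south-east lattice paths in this grid, and its faces are exactly the subsets of grid points lying on some such path, equivalently the chains in the coordinatewise partial order. Ordering the pairs so that $i_1\le\cdots\le i_m$, the chain property forces $j_1\le\cdots\le j_m$ as well, giving condition~(1); distinctness of the pairs is condition~(2). Conversely, any chain of distinct grid points can be greedily extended to a full monotone path, so (1), (2), and (3) together characterize the faces precisely.

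The main technical point is that a face of $\partial\Gamma$ can sit in more than one facet of $\P_{A_n}$ at once, so I must confirm the characterization is independent of the choice of ambient $\F_S$. This is immediate because conditions (1)--(3) never refer to $S$ or $T$; equivalently, two adjacent facets share a subpolytope that is itself a product of smaller simplices, and the staircase triangulations induced from either side agree on it. I will freely invoke the standard description of the staircase triangulation of $\Delta_S\times\Delta_T$ and its faces from \cite{GKZ}, since the paper has already identified the restriction of $\Gamma$ to each facet as this triangulation.
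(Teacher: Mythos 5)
Your proposal is correct and follows essentially the same route as the paper's (much terser) proof: condition (3) places the vertex set on some facet $\Delta_S\times\Delta_T$, and conditions (1)--(2) say the corresponding grid positions form a chain extendable to a staircase path, i.e.\ a face of the staircase triangulation of that facet. The extra details you supply (extending a chain to a full monotone path, and checking that the characterization does not depend on the choice of ambient facet) are exactly the points the paper dismisses as ``straightforward from the definitions.''
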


\begin{proof}
This is straightforward from the definitions. The third condition guarantees that $\v_{i_1 j_1}, \ldots, \v_{i_mj_m}$ are vertices of some (not necessarily unique) facet $\Delta_{S} \times \Delta_{T}$, while the first two conditions guarantee that they form a subset of some staircase path in the corresponding $S \times T$ submatrix of $\V_n$.
\end{proof}

In the matrix $\V_n$, we can see a face as a sequence of positions that (1) moves weakly southeast, (2) never stagnates,  and (3) never uses a row and a column of the same label. This is illustrated in Figure \ref{triangA3}.

\begin{center}
\begin{figure}[htbp]
\centering
\includegraphics[scale=.6]{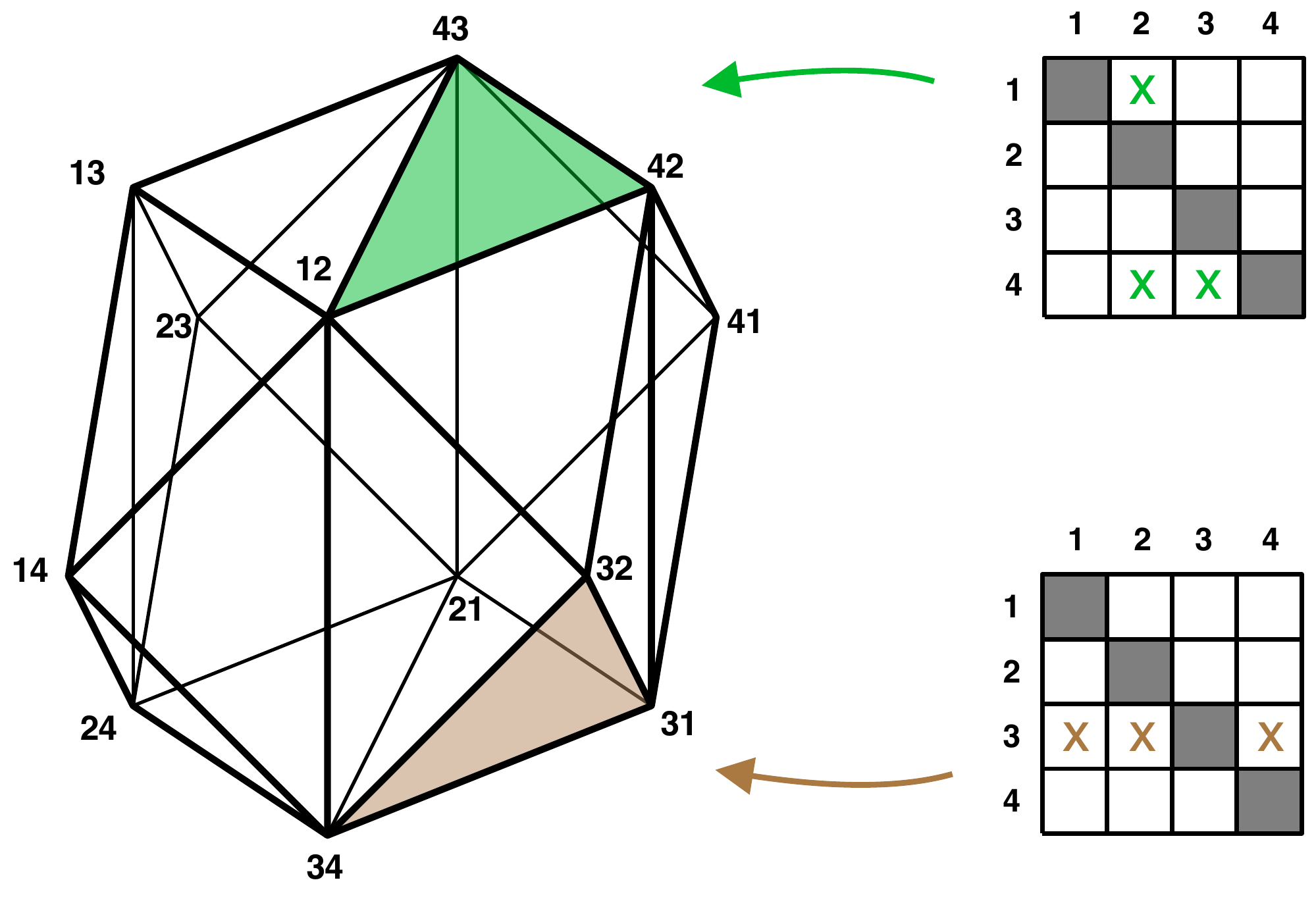}
\caption{The staircase triangulation of the root polytope $\P_{A_3}$.}
\label{triangA3}
\end{figure}
\end{center}

\begin{theorem}
The $f$-vector of the 
reverse lexicographic triangulation $\partial\Gamma$ of the boundary of $P_{A_n}$
is given by
\begin{equation*}
f_{m-1}(\partial \Gamma) = f_m(\Gamma) =  \binom{n+m}{m,m,n-m}% = \frac{(n+m)!}{m! \, m!\, (n-m)!}.
\end{equation*}
for $0 \leq m \leq n$.
\end{theorem}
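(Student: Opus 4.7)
The plan is to biject the $(m-1)$-faces of $\partial\Gamma$ with pairs $(I, K)$, where $I = (i_1 \leq i_2 \leq \cdots \leq i_m)$ is a weakly increasing sequence in $\{0, 1, \ldots, n\}$ and $K = (k_1 < k_2 < \cdots < k_m)$ is a strictly increasing sequence in $\{1, 2, \ldots, n\}$. These are counted respectively by $\binom{n+m}{m}$ and $\binom{n}{m}$, and $\binom{n+m}{m}\binom{n}{m} = \binom{n+m}{m,m,n-m}$, which gives the desired count.

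Given a face $\{\v_{i_1 j_1}, \ldots, \v_{i_m j_m}\}$ ordered as in Proposition~\ref{mfaceconditions}, set $R := \{i_1, \ldots, i_m\}$, list $E := \{0, 1, \ldots, n\} \setminus R$ as $e_1 < e_2 < \cdots < e_{n+1-|R|}$, and define $\pi(s)$ by $j_s = e_{\pi(s)}$. Let $B(s) \in \{1, \ldots, |R|\}$ be the index of the block of consecutive equal $i$-values containing position $s$, so that $B(1) = 1$ and $B$ increases by one exactly at the positions $s$ with $i_s < i_{s+1}$. The forward map sends the face to the pair consisting of $I$ together with $K = (k_s)$ defined by
\[
k_s := \pi(s) + B(s) - 1.
\]
Condition (1) of Proposition~\ref{mfaceconditions} makes $\pi$ weakly increasing, while condition (2) forces $\pi$ strictly increasing within each block (otherwise two consecutive pairs $(i_s, j_s)$ and $(i_{s+1}, j_{s+1})$ would coincide). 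Combined with the step behavior of $B$, this forces $k_s$ to strictly increase in $s$. Writing $p := |R|$, we have $k_1 = \pi(1) \geq 1$ and $k_m = \pi(m) + (p-1) \leq (n+1-p) + (p-1) = n$, so $K$ lies in $\{1, \ldots, n\}$.

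For the inverse, take any pair $(I, K)$, recover $R$ and $B$ from $I$, and define $\pi(s) := k_s - B(s) + 1$ and $j_s := e_{\pi(s)}$. The strict increase of $K$ combined with the step behavior of $B$ makes $\pi$ weakly increasing overall and strictly increasing within blocks, and running the range bound backwards places each $\pi(s)$ in $\{1, \ldots, n+1-p\}$. Therefore $(i_s, j_s)$ satisfies the three conditions of Proposition~\ref{mfaceconditions}, and the two maps are inverse to each other by construction.

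The only creative step is discovering the shift $k_s = \pi(s) + B(s) - 1$: this is precisely what translates the ``strictly increasing within blocks, weakly increasing between blocks'' behavior of $\pi$ into the uniform strict increase of $K$ inside $\{1, \ldots, n\}$. The remaining verification is routine inequality tracking, and the case $m = 0$ is immediate. We conclude
\[
f_{m-1}(\partial\Gamma) \;=\; \binom{n+m}{m}\binom{n}{m} \;=\; \binom{n+m}{m,m,n-m}.
\]
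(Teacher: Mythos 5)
Your proof is correct, and it takes a genuinely different route from the paper's. Both arguments start from the same combinatorial description of the faces (Proposition~\ref{mfaceconditions}), but the paper then counts faces according to the pair $(a,b)$ of numbers of rows and columns of $\V_n$ that they occupy, obtaining
$f_{m-1}(\partial\Gamma)=\sum_{a,b}\binom{n+1}{a,b,n+1-a-b}\binom{m-1}{m-a,m-b,a+b-m-1}$,
and then collapses this double sum to a single multinomial coefficient by an algebraic identity (multiplying through by $\binom{n+m}{n+1}$ and regrouping the multinomials). You instead give a direct bijection with pairs consisting of a weakly increasing $m$-sequence in $\{0,\dots,n\}$ and an $m$-subset of $\{1,\dots,n\}$, the shift $k_s=\pi(s)+B(s)-1$ being exactly what converts the ``strictly increasing within blocks, weakly increasing across blocks'' behavior of $\pi$ into uniform strict increase. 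I checked the details: the forward map lands in the right range since $k_1=\pi(1)\ge 1$ and $k_m=\pi(m)+p-1\le(n+1-p)+p-1=n$, and the inverse is well defined because $k_s\ge s\ge B(s)$ gives $\pi(s)\ge 1$ while $k_s\le n-(m-s)$ and $B(s)\ge p-(m-s)$ give $\pi(s)\le n+1-p$; conditions (1)--(3) of Proposition~\ref{mfaceconditions} are then recovered exactly as you say. Your approach avoids the summation identity entirely and makes the factorization $\binom{n+m}{m,m,n-m}=\binom{n+m}{m}\binom{n}{m}$ combinatorially transparent; the paper's computation has the minor virtue of exhibiting the finer refinement of the count by the statistic $(a,b)$, which your bijection discards.
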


\begin{proof}
The problem of counting the $m$-faces of $\Gamma(P_{A_n})$ has
been reduced to counting the possible choices for indices $(i_1, j_1), \ldots, (i_m, j_m)$
that meet the conditions in Proposition \ref{mfaceconditions}. We find it convenient to view these as $m$ positions in the matrix $\V_n$.

Consider the choices that occupy exactly $a$ columns and $b$ rows. There are ${n+1 \choose a,b,n+1-a-b}$ choices of rows and columns satisfying condition (3). Once we have chosen these, let us focus our attention on the $a \times b$ submatrix that remains; condition (3) guarantees that there are no zero entries in this matrix. Inside this submatrix, the vertices $(i_1, j_1), \ldots, (i_m,j_m)$ form a path that never moves up or to the left and touches every row and column. This path must go from the top left to the bottom right corner of the submatrix using the steps $(1,0), (0,-1)$, and $(1,-1)$. The total number of steps is $m-1$ and the distances covered by the path in the horizontal and vertical directions are $b-1$ and $a-1$, respectively. Therefore, the number of steps of the form $(1,0), (0,-1)$, and $(1,-1)$ must be $m-a, m-b,$ and $a+b-m-1$, respectively. There are ${m-1 \choose m-a,m-b,a+b-m-1}$ paths of this form. 
\begin{center}
\begin{figure}[htbp]
\centering
\includegraphics[scale=.5]{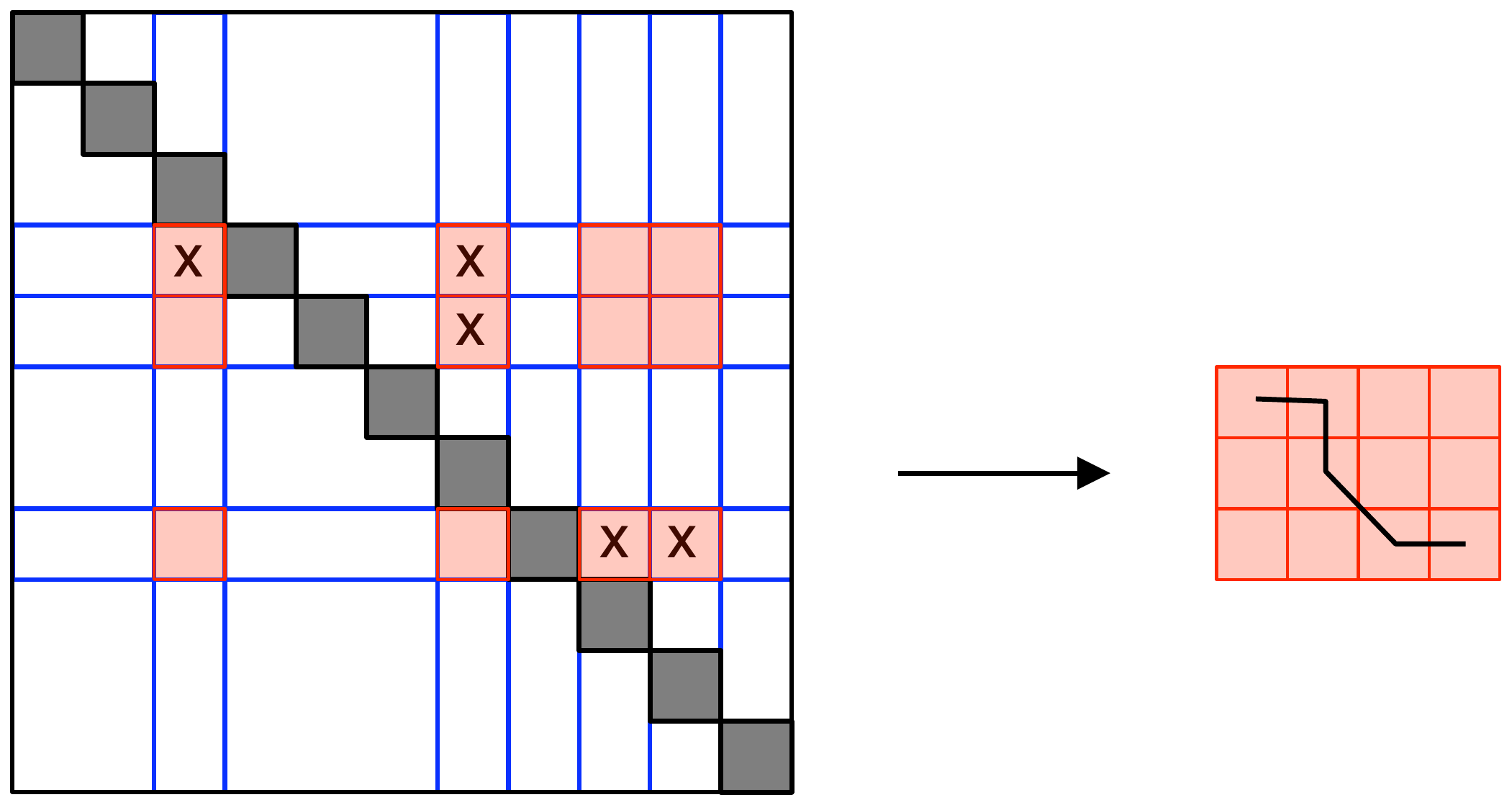}
\caption{Going from the label of a cell of $\partial \Gamma$ to a path in a rectangle with steps $(1,0), (0,-1)$ and $(1,-1)$.}
\label{fig:staircase}
\end{figure}
\end{center}
Figure \ref{fig:staircase} shows how to convert a partial staircase into a path from the top left to the bottom right corner of a rectangle, using steps of the form $(1,0), (0,-1)$ and $(1,-1)$. First we select the rows and columns used by the partial staircase, and delete all the other ones. The cells of the staircase form a path in the resulting rectangle.

It follows that
\begin{eqnarray*}
f_{m-1}(\partial\Gamma) &=& \sum_{a,b} {n+1 \choose a,b,n+1-a-b} {m-1 \choose m-a,m-b,a+b-m-1}\\
{n+m \choose n+1} f_{m-1}(\partial \Gamma)&=& 
\sum_{a,b} {n+m \choose n+1} {n+1 \choose a,b,n+1-a-b} {m-1 \choose m-a,m-b,a+b-m-1}\\
&=&  \sum_{a,b} {n+m \choose a,b,n+1-a-b, m-a,m-b,a+b-m-1}\\
&=&  \sum_{a,b} {n+m \choose m, m, n-m} {m \choose a} {m \choose b} {n-m \choose n+1-a-b}\\
&=& {n+m \choose m, m, n-m} {n+m \choose n+1}\\
\end{eqnarray*}
as desired. \end{proof}

\begin{theorem}\label{ftoh}
  The coordinator polynomial for the growth series of the lattice $A_{n}$
  generated as a monoid by $\M_{A_{n}}=\{\e_i-\e_j,:\, 0 \leq i,j \leq n+1
  \,\,\text{with}\,\,i \neq j \}$ is
  \[
  h_{A_n}(x) = \sum_{k=0}^{n} {n \choose k}^2 x^k.
  \]
\end{theorem}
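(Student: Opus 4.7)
The plan is to take the $f$-vector of $\partial\Gamma$ produced by the previous theorem and convert it to the coordinator polynomial via the standard relation $h_\Gamma(x) = f_\Gamma(x-1)$. The framework of Section~2, combined with the total unimodularity of $M_{A_n}$ (Proposition \ref{uniproof}) and the coning observation of Corollary \ref{unimodAn}, already identifies the coordinator polynomial of $A_n$ with the $h$-polynomial of the boundary triangulation $\partial\Gamma$. Thus the theorem reduces to the polynomial identity
\[
\sum_{m=0}^n \binom{n+m}{m,m,n-m}(x-1)^{n-m} \;=\; \sum_{k=0}^n \binom{n}{k}^2 x^k.
\]

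To establish this, I would first rewrite the multinomial as $\binom{n+m}{m,m,n-m} = \binom{n}{m}\binom{n+m}{m}$, substitute $y = x-1$, and expand $(y+1)^k$ on the right-hand side. Matching coefficients of $y^{n-m}$ on both sides reduces the theorem to the scalar identity
\[
\binom{n}{m}\binom{n+m}{m} \;=\; \sum_{k \geq n-m} \binom{n}{k}^2 \binom{k}{n-m}.
\]
This is in turn handled by applying the subset-of-subset rule $\binom{n}{k}\binom{k}{n-m} = \binom{n}{n-m}\binom{m}{k-(n-m)}$, which pulls out a factor of $\binom{n}{n-m} = \binom{n}{m}$, and then using the Vandermonde--Chu identity to collapse the remaining sum $\sum_k \binom{n}{k}\binom{m}{k-(n-m)}$ to $\binom{n+m}{m}$.

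The ``main obstacle'' is therefore no real obstacle at all: once the $f$-vector is in hand, the reduction to a one-step application of Vandermonde--Chu is purely bookkeeping. The substantive geometric and combinatorial work has already been done in the preceding sections, namely constructing the staircase triangulation, verifying its unimodularity, and enumerating its faces via the path model in the matrix $\V_n$. I would not expect this final step to take more than a few lines in the written proof.
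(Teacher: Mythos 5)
Your proposal is correct and follows essentially the same route as the paper: both reduce the theorem to the identity $f_{\partial\Gamma}(x)=\sum_{k}\binom{n}{k}^2(x+1)^k$ and prove it with the same two ingredients (the subset-of-subset identity $\binom{n}{k}\binom{k}{l}=\binom{n}{l}\binom{n-l}{k-l}$ and Vandermonde--Chu), merely organized in the opposite direction — you match coefficients of $(x-1)^{n-m}$, while the paper regroups the expanded sum into powers of $x+1$. The only point you gloss over is that Theorem~\ref{h*} identifies the coordinator polynomial with the \emph{reversal} $x^n h_{\partial\Gamma}(1/x)$ rather than with $h_{\partial\Gamma}(x)$ itself; the paper closes this small gap in one line by noting that $\sum_{k}\binom{n}{k}^2x^k$ is palindromic.
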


\begin{proof}
We first compute $h_{\partial \Gamma}(x)$. By definition $h_{\partial \Gamma}(x+1) = 
f_{\partial \Gamma}(x)$, which is
\begin{eqnarray*}
\sum_{m=0}^n {n+m \choose m,m,n-m}x^{n-m} =  \sum_{l=0}^n  {n \choose l}{2n-l \choose n}x^l &=& \sum_{l=0}^n \sum_{k=l}^n {n \choose l} {n \choose k} {n-l \choose n-k} x^l = \\
\sum_{l=0}^n \sum_{k=l}^n {n \choose k}^2 {k \choose l} x^l = \sum_{k=0}^n \sum_{l=0}^k {n \choose k}^2 {k \choose l} x^l &=& \sum_{k=0}^n {n \choose k}^2(x+1)^k. 
\end{eqnarray*}
This shows that $h_{\partial \Gamma}(x) = \sum_{k=0}^n {n \choose k}^2 x^k$, and since this 
polynomial is palindromic we get $h_{A_n}(x) = x^n h_{\partial \Gamma}(x^{-1}) = h_{\partial \Gamma}(x)$. 
\end{proof}

\section{The Lattice $C_n$} \label{sec:Cn}
The root lattice $C_n \subset \R^n$ is defined by $C_n:= \{\x \in \Z^n :
\sum_i x_i\,\,\text{is even}\}.$
\begin{proposition}\label{MCn}
The root lattice $C_n$ is a rank $n$ lattice generated as a monoid by the set
$\M_{C_n} =\{\pm 2\e_i : 1 \leq i \leq n \, ;\,  \pm \e_i \pm \e_j: 1  \leq i \neq j \leq n\}$.
%$\M_{C_n} = \{\pm2\e_i : \, i=1,\ldots, n\} \cup \{\pm \e_i \pm \e_j : \, i < j\}$.
\end{proposition}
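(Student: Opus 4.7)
The plan is to mirror the strategy used in the proof for $A_n$: use the $\ell^1$-norm $|\x| := \sum_i |x_i|$ as a monovariant and repeatedly peel off a generator from $\M_{C_n}$ that strictly decreases it. The rank claim is a side remark, handled by exhibiting $n$ linearly independent lattice points inside $C_n$.

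First, I would verify that $\M_{C_n} \subset C_n$, which is immediate since every generator has coordinate sum in $\{-2,0,2\}$. For the rank, observe that $2\e_1,\dots,2\e_n \in C_n$ are linearly independent, so the $\R$-span of $C_n$ is all of $\R^n$ and the rank is $n$.

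The substantive step is to show that every $\x \in C_n$ is a nonnegative integer combination of $\M_{C_n}$. I would induct on $|\x|$. The key parity observation is that $|x_i| \equiv x_i \pmod 2$, so $|\x| \equiv \sum_i x_i \equiv 0 \pmod 2$; in particular, if $\x \neq \0$, then $|\x| \geq 2$. Now split into two cases:
\begin{itemize}
\item If $\x$ has two distinct nonzero coordinates $x_i,x_j$ with $i \neq j$, set $\y := \x - (\mathrm{sign}(x_i)\e_i + \mathrm{sign}(x_j)\e_j)$. Then $\y \in C_n$ (the coordinate sum changes by an even integer) and $|\y| = |\x| - 2$.
\item If $\x$ has exactly one nonzero coordinate $x_i$, then $|x_i| = |\x|$ is even and at least $2$; set $\y := \x - \mathrm{sign}(x_i)\cdot 2\e_i$. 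Again $\y \in C_n$ and $|\y| = |\x| - 2$.
\end{itemize}
In both cases the vector subtracted lies in $\M_{C_n}$, so by the inductive hypothesis $\y$ is a nonnegative integer combination of $\M_{C_n}$, and hence so is $\x$.

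I do not anticipate any real obstacle: the argument is essentially a $C_n$-flavored copy of the $A_n$ proof, the only new wrinkle being the parity check that forces $|\x|$ to be even so that the single-nonzero-coordinate case can be handled by a $\pm 2\e_i$ generator rather than a $\pm \e_i \pm \e_j$ one.
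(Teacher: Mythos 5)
Your proof is correct. It does, however, take a different route from the paper's. The paper argues in two stages: first it subtracts appropriate nonnegative multiples of $\pm 2\e_i$ all at once to reduce $\x$ to a $0/1$-vector $\y \in C_n$, then observes that $\y$ has an even number of coordinates equal to $1$ (parity of the coordinate sum is preserved), so $\y$ is a sum of generators $\e_i+\e_j$. You instead run a single induction on the monovariant $|\x|=\sum_i|x_i|$, peeling off exactly one generator per step and using the parity observation $|\x|\equiv\sum_i x_i\equiv 0\pmod 2$ to guarantee that the single-nonzero-coordinate case can always be resolved by a $\pm 2\e_i$. Both arguments are elementary and about equally short; yours has the virtue of being a uniform template shared with the paper's $A_n$ proof (one generator subtracted per step, strict decrease of the $\ell^1$-norm), while the paper's version makes the role of the two generator types more transparent (the $\pm 2\e_i$ handle the ``integer part,'' the $\pm\e_i\pm\e_j$ handle the residual parity pattern). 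Your case analysis checks out: in the two-nonzero-coordinate case the subtracted vector $\mathrm{sign}(x_i)\e_i+\mathrm{sign}(x_j)\e_j$ lies in $\M_{C_n}$ and drops $|\x|$ by exactly $2$, and in the one-nonzero-coordinate case the evenness of $x_i$ forces $|x_i|\ge 2$ so that $\pm 2\e_i$ applies. The rank statement, which the paper does not bother to prove, is correctly dispatched by exhibiting $2\e_1,\dots,2\e_n$.
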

\begin{proof}
Consider any $\x \in C_n$. By subtracting from it the appropriate nonnegative multiples of $2\e_i$ or $-2\e_i$, we obtain a vector $\y \in C_n$ where each $y_i$ equals $0$ or $1$. The number of coordinates of $\y$ equal to $1$ is even, so $\y$ is a sum of vectors of the form $\e_i+\e_j$ with $i \neq j$. \end{proof}
As in the earlier section, we define $\P_{C_n}= \conv(\M_{C_n})$. 
The root polytope $\P_{C_n}$ is the cross polytope (which we will define next) dilated by a factor of two, 
$\P_{C_n}= 2\Diamond_n$.

\begin{definition} The \emph{crosspolytope} $\Diamond_n$ in $\R^n$
is given by the facet and vertex descriptions
\[\Diamond_n:=\left\{\x \in \R^n : \sum_{i=1}^n |x_i| \leq 1\right\} =
\conv(\{\e_1, -\e_1,\dots,\e_n,-\e_n\}).\]\end{definition}
\begin{remark} In the rest of this section we will compute the growth series of $\Lat_{\M_{C_n}}$ 
essentially by considering lattice points in the integer dilates of $\P_{C_n}$. However, when we say
``lattice points" or ``unimodular triangulation," we mean these words {\em with respect to the lattice} 
$\Lat_{\M_{C_n}}$. For instance, in this sense, the only interior lattice point of $\P_{C_n}$ is the origin.
\end{remark}
In Section \ref{sec:reprise} we will derive a unimodular triangulation of $\P_{C_n}$ from a triangulation of the root polytope $\P_{D_n}$, and use it to give a proof of Theorem \ref{maintheorem} in the $C_n$ case. Such a triangulation is obtained from a triangulation of the facets of $\P_{D_n}$ by forming the cone of this boundary triangulation from the origin.
One can also construct  a specific pulling triangulation where the origin is pulled first; for the details we refer the reader to Section 5.2 in \cite{seashore}. Here we give a different proof; we start by giving a simple description of the lattice points on the faces of the root polytope $\P_{C_n}$.

\begin{proposition} The set of lattice points on any $(k-1)$-face of $\P_{C_n}$ for $k=0,\ldots, n$ is affinely isomorphic to the set of lattice points in the simplex
\[
V(2,k) := \left\{ \u \in \Z_{\ge 0}^k \, : \: u_1 + u_2 + \cdots + u_k = 2 \right\}. 
\]
\end{proposition}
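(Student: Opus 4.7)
The plan is to first pin down what a $(k-1)$-face of $\P_{C_n} = 2\Diamond_n$ actually looks like, then write down an obvious affine isomorphism from the standard simplex $\conv\{2\e_1,\dots,2\e_k\} \subset \R^k$ to this face, and finally check that this map restricts to a bijection on lattice points.

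First I would recall that the faces of the crosspolytope $\Diamond_n$ are in bijection with sign-consistent subsets of $\{\pm 1,\dots,\pm n\}$, i.e., subsets that never contain both $+i$ and $-i$. Dilating by $2$, a typical $(k-1)$-face of $\P_{C_n}$ is therefore
\[
F = \conv\{\,2\varepsilon_1 \e_{i_1},\ 2\varepsilon_2 \e_{i_2},\ \ldots,\ 2\varepsilon_k \e_{i_k}\,\},
\]
where $1 \le i_1 < i_2 < \cdots < i_k \le n$ and each $\varepsilon_j \in \{+1,-1\}$. Since these $k$ vectors have disjoint supports, they are affinely independent, so $F$ is a $(k-1)$-simplex.

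Next I would define the affine map $\phi \colon \R^k \to \R^n$ by
\[
\phi(u_1,\dots,u_k) \;=\; \sum_{j=1}^k u_j\,\varepsilon_j\,\e_{i_j}.
\]
This sends the vertex $2\e_j$ of $V(2,k)$ (as an element of $\R^k$) to the vertex $2\varepsilon_j \e_{i_j}$ of $F$, so it restricts to an affine isomorphism between the ambient simplex $\conv\{2\e_1,\dots,2\e_k\}$ and $F$.

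The heart of the argument is the bijection on lattice points. In the forward direction, if $\u \in V(2,k) \cap \Z^k$, then $\phi(\u) \in \Z^n$ and its coordinate sum equals $\sum_j u_j \varepsilon_j$, which has the same parity as $\sum_j u_j = 2$; hence $\phi(\u) \in C_n$. In the reverse direction, suppose $\p \in F \cap C_n$ and write $\p = \sum_j \lambda_j(2\varepsilon_j \e_{i_j})$ with $\lambda_j \ge 0$ and $\sum \lambda_j = 1$. Because the supports of the vertices are disjoint, the coefficients $\lambda_j$ are uniquely determined by $\p$, and the entry of $\p$ in position $i_j$ is $2\varepsilon_j \lambda_j$. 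Integrality forces $2\lambda_j \in \Z_{\ge 0}$, so $u_j := 2\lambda_j$ gives a point of $V(2,k) \cap \Z^k$ with $\phi(\u) = \p$. The main thing to be careful about is the lattice condition for $C_n$ (even coordinate sum), but as observed above this is automatic once $\phi(\u) \in \Z^n$, so no real obstacle arises; the disjoint-supports observation is what makes the whole correspondence essentially forced.
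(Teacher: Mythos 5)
Your proof is correct and takes essentially the same route as the paper, which disposes of the claim in one line by noting that the crosspolytope is regular and simplicial; you simply make explicit the affine map $\phi$ and the observation that the parity condition defining $C_n$ is automatic on these faces. No gaps.
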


\begin{proof}
This is immediate from the fact that crosspolytopes are regular and simplicial.
\end{proof}

The Hilbert series of $K[V(2,k)]$ is given in \cite[Corollary 2.6]{katzman} as 
\[H(K[V(2,k)], x) = \frac{h_k(x)}{(1-x)^k} = \frac{\sum_{i=0}^k {k \choose 2i} x^i}{(1-x)^k}. \]
Now we can compute the growth series of $C_n$ as an inclusion-exclusion count of Hilbert 
series of above kind for all dimensions, namely
\[ \sum_{j=0}^n (-1)^{n-j} f_{j-1} \frac{h_j(x)}{(1-x)^j} \]
where $f_{j-1}$ is the number of $(j-1)$-dimensional faces of the cross polytope $\Diamond_n$.
Using the duality between the cross polytope and the $n$-dimensional hypercube we know 
that $f_{j-1} = {n \choose j} 2^j$. Substituting this in the above series and writing with a common
denominator we get 
\[ \frac{1}{(1-x)^n} \sum_{j=0}^n (-1)^{n-j} (1-x)^{n-j}  {n \choose n-j} 2^j  \sum_{i=0}^j {j \choose 2i} x^i.\]
The numerator of this series is the coordinator polynomial we are after, and note that this polynomial
consists of the whole power terms of
\[ \sum_{j=0}^n (-1)^{n-j} (1-x)^{n-j}  {n \choose n-j} 2^j  (1 + \sqrt{x})^j.\]
Using the binomial theorem, the above polynomial is equal to 
$((x-1) + (2 + 2\sqrt{x}))^n = (1 + \sqrt{x})^{2n}$. This leads us to our main theorem in this section.
 
\begin{theorem}
The coordinator polynomial for the lattice $C_n$ generated as a
monoid by the standard generators $\M_{C_n} = \{\pm \e_i \pm \e_j: 1
\leq i,j \leq n\}$ is given by
\[h_{C_n}(x) = \sum_{k=0}^n \binom{2n}{2k} x^k.\]
\end{theorem}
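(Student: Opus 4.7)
The plan is to finish the computation already set in motion in the paragraph immediately preceding the theorem. The inclusion--exclusion over the faces of the crosspolytope yields
\[
\sum_{j=0}^n (-1)^{n-j} \binom{n}{j} 2^j \frac{h_j(x)}{(1-x)^j}, \qquad h_j(x) = \sum_{i=0}^j \binom{j}{2i} x^i,
\]
for the growth series, and after clearing denominators to $(1-x)^n$ the numerator is, by Proposition~\ref{prop:hilbert} and Theorem~\ref{hilbertgrowth}, precisely the coordinator polynomial $h_{C_n}(x)$. So the goal is to evaluate
\[
h_{C_n}(x) \;=\; \sum_{j=0}^n (-1)^{n-j} (1-x)^{n-j} \binom{n}{j} 2^j\, h_j(x).
\]

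The decisive observation is the $\sqrt{x}$ trick: since $(1+\sqrt{x})^j = \sum_{k=0}^j \binom{j}{k} x^{k/2}$, the polynomial $h_j(x)$ is exactly the integer-power-in-$x$ part of $(1+\sqrt{x})^j$. Because the remaining factors $(-1)^{n-j}(1-x)^{n-j}\binom{n}{j}2^j$ are themselves polynomials in $x$, the integer-power projection is $\Z$-linear and commutes with the outer sum. Therefore $h_{C_n}(x)$ equals the integer-power part of
\[
\widetilde{N}(x) \;=\; \sum_{j=0}^n (-1)^{n-j}(1-x)^{n-j} \binom{n}{j} 2^j (1+\sqrt{x})^j.
\]

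Next I would rewrite $(-1)^{n-j}(1-x)^{n-j} = (x-1)^{n-j}$ and $2^j(1+\sqrt{x})^j = (2+2\sqrt{x})^j$, and apply the binomial theorem to collapse the sum:
\[
\widetilde{N}(x) \;=\; \sum_{j=0}^n \binom{n}{j} (x-1)^{n-j} (2+2\sqrt{x})^j \;=\; \bigl((x-1)+(2+2\sqrt{x})\bigr)^n \;=\; (1+\sqrt{x})^{2n}.
\]
Expanding $(1+\sqrt{x})^{2n} = \sum_{k=0}^{2n} \binom{2n}{k} x^{k/2}$ and retaining only the terms with $k$ even yields the desired formula $h_{C_n}(x) = \sum_{k=0}^n \binom{2n}{2k} x^k$.

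There is no serious obstacle: the argument is routine bookkeeping once one sees that $(1+\sqrt{x})^2 = 1 + 2\sqrt{x} + x$ is exactly what appears inside the binomial collapse. The only subtlety worth checking is the commutativity of the integer-power projection with the finite sums and products, which is immediate from linearity since $\sqrt{x}$ enters only through the $(1+\sqrt{x})^j$ factors. Palindromicity $\binom{2n}{2k} = \binom{2n}{2(n-k)}$ and nonnegativity are then visible from the closed form, consistent with the general prediction of Theorem~\ref{maintheorem}.
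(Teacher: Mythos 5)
Your proposal is correct and follows essentially the same route as the paper: the same inclusion--exclusion over the faces of $\Diamond_n$ with $f_{j-1}=\binom{n}{j}2^j$, the same observation that $h_j(x)$ is the integer-power part of $(1+\sqrt{x})^j$, and the same binomial collapse $\sum_j \binom{n}{j}(x-1)^{n-j}(2+2\sqrt{x})^j=(1+\sqrt{x})^{2n}$. Your explicit remark that the integer-power projection is linear and so commutes with the outer sum is a small but welcome clarification of a step the paper leaves implicit.
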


From this, we use the method of Theorem \ref{ftoh}
%it is straightforward 
to extract the corresponding
$f$-polynomial
\[
 f_{C_n}(x) = 
  \sum_{m=0}^n
  \frac{n \, 2^{2m}}{n+m}\binom{n+m}{2m} x^{n-m}
\]
%
%\[
%   f_{C_n}(x) \ = \ 
%   \sum_{k=0}^n x^k \sum_{j=k}^n \binom{2n}{2k}\binom{j}{k}
%   \ = \ 
%   \sum_{k=0}^n   \frac{n 2^{2n-2j-1}}{n-j}\binom{2n-j-1}{j}
%   x^k,
%\]
proving the statement in Theorem \ref{fpolys}.

\section{The lattice $D_n$}

The root lattice $D_n$ is defined by $D_n:= \{x\in\ZZ^n: \sum_i x_i
\text{ is even}\}$. Note that this lattice is the same as $C_n$. The only difference is in the set 
of generators.

\begin{proposition}
  The root lattice $D_n$ is generated by $\M_{D_n}=\{\pm
  \e_i \pm \e_j: 1 \leq i \neq j \leq n\}$  as a monoid. \hfill$\Box$
\end{proposition}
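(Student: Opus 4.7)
The plan is to mimic the proof given for the $A_n$ case: induct on the $\ell^1$-norm $|\x| := \sum_i |x_i|$, showing that every $\x \in D_n$ with $|\x| > 0$ can be reduced (while remaining in $D_n$) by subtracting a suitable element of $\M_{D_n}$. First observe that the parity condition $\sum_i x_i \equiv 0 \pmod 2$ defining $D_n$ is automatically preserved under subtraction of any generator $\pm \e_i \pm \e_j$, since each such generator has coordinate sum lying in $\{-2, 0, 2\}$. Thus each intermediate vector in the reduction remains in $D_n$, and the task reduces to producing a reduction rule that strictly decreases $|\x|$.

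I will split the induction step into two cases based on the support of $\x$. If $\x$ has two distinct nonzero coordinates $x_i$ and $x_j$, let $\epsilon_i, \epsilon_j \in \{+1,-1\}$ denote their signs; then $\x - (\epsilon_i \e_i + \epsilon_j \e_j)$ lies in $D_n$ and has strictly smaller $\ell^1$-norm, so the induction proceeds. If instead $\x$ has at most one nonzero coordinate, then $|\x| = 1$ is impossible (a single $\pm 1$ entry would violate the parity condition), so we must have $\x = 2k \e_i$ for some nonzero integer $k$. Choosing any $j \neq i$ (which is available since $n \geq 2$) and letting $\epsilon := \mathrm{sign}(k)$, I invoke the identity
\[
2\epsilon \e_i = \epsilon(\e_i + \e_j) + \epsilon(\e_i - \e_j),
\]
which exhibits $2\epsilon \e_i$ as a sum of two elements of $\M_{D_n}$. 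Subtracting it from $\x$ produces a vector in $D_n$ with $\ell^1$-norm smaller by $2$, so the induction continues.

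The hard part here is essentially nonexistent; the only subtle point is recognizing that when $\x$ has a single nonzero coordinate, the parity condition forces that coordinate to be even, which is exactly what makes the identity $2\e_i = (\e_i + \e_j) + (\e_i - \e_j)$ applicable and closes the argument. The base case $\x = \0$ corresponds to the empty nonnegative integer combination, and the induction terminates because $|\x|$ strictly decreases at each step.
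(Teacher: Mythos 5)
Your proof is correct, but it takes a different (more self-contained) route than the paper. The paper's proof is a two-line reduction: since $C_n$ and $D_n$ are the same lattice, and Proposition~\ref{MCn} already shows that $\M_{C_n}$ generates it as a monoid, one only needs the identity $2\e_i = (\e_i+\e_j)+(\e_i-\e_j)$ to express the extra generators $\pm 2\e_i$ of $\M_{C_n}$ as nonnegative combinations of $\M_{D_n}$. You instead run a direct descent on the $\ell^1$-norm, parallel to the paper's $A_n$ argument: pair up two nonzero coordinates when possible, and otherwise use the parity condition to see that a lone nonzero coordinate is even and apply the same identity. Both arguments hinge on exactly that identity; the paper's version is shorter because it leans on the already-proved $C_n$ statement, while yours is independent of it and makes the termination of the rewriting process explicit. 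One small wording issue: in your second case you rule out only ``a single $\pm 1$ entry,'' whereas what the parity condition actually gives (and what you need) is that a single nonzero coordinate must be \emph{even}; you do state this correctly in your closing remark, so the argument stands, but the parenthetical should say ``odd'' rather than ``$\pm 1$.''
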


\begin{proof}
Observe that $2\e_i = (\e_i+\e_j) + (\e_i-\e_j)$ and invoke Proposition \ref{MCn}.
\end{proof}

Let $\Delta_{2,n} = \conv\{\e_i+\e_j:1\le i\ne j\le n\}$ be the $(n-1)$-dimensional second hypersimplex  in~$\RR^n$.

\begin{proposition}
  The root polytope $\P_{D_n}=\conv\M_{D_n}$ has dimension~$n$,
  $2n(n-1)$~vertices, $2^n$~hypersimplex facets $\Delta_{2,n}$, and
  $2n$~cross-polytope facets~$\Diamond_{n-1}$.
\end{proposition}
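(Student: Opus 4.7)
The plan is to verify the four claims in order. The identity $(\e_i+\e_j) + (\e_i-\e_j) = 2\e_i$ shows that every standard basis vector lies in the real linear span of $\M_{D_n}$, and since $\0 = \tfrac12(\e_1+\e_2) + \tfrac12(-\e_1-\e_2) \in \P_{D_n}$, the affine hull of $\P_{D_n}$ equals $\R^n$, giving $\dim \P_{D_n} = n$. For the vertex count, each $v \in \M_{D_n}$ uniquely maximizes $\langle v, \cdot\rangle$ on $\P_{D_n}$: one checks $\langle v, v\rangle = 2$, while $\langle v, v'\rangle \in \{-2,-1,0,1\}$ for every other $v' \in \M_{D_n}$. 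Thus all $4\binom{n}{2} = 2n(n-1)$ generators are vertices.

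I would next exhibit the two families of facets. For each $\epsilon \in \{\pm 1\}^n$, the functional $f_\epsilon(x) = \sum_i \epsilon_i x_i$ attains its maximum value $2$ on $\P_{D_n}$ at the $\binom{n}{2}$ vertices $\epsilon_i\e_i + \epsilon_j\e_j$ with $i \neq j$; the linear change of coordinates $y_i = \epsilon_i x_i$ carries this face onto the $(n-1)$-dimensional hypersimplex $\Delta_{2,n}$, producing $2^n$ hypersimplex facets. Similarly, for each $i \in [n]$ and $\tau \in \{\pm 1\}$, the functional $\tau x_i$ attains its maximum value $1$ on $\P_{D_n}$ at the $2(n-1)$ vertices $\tau\e_i + \sigma\e_j$ ($j \neq i$, $\sigma \in \{\pm 1\}$), which are the vertices of a cross-polytope $\Diamond_{n-1}$ in the hyperplane $\{x_i = \tau\}$, producing $2n$ cross-polytope facets.

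The main obstacle is proving that these $2^n + 2n$ facets are the only facets. To this end I would establish the outer description
\[
\P_{D_n} \;=\; Q \;:=\; \Bigl\{x \in \R^n : |x_i| \le 1 \text{ for all } i \text{ and } \sum_i \epsilon_i x_i \le 2 \text{ for all } \epsilon \in \{\pm 1\}^n \Bigr\}.
\]
The inclusion $\P_{D_n} \subseteq Q$ is immediate from the vertex list. For the reverse, given $x \in Q$, the sign-flip and coordinate-permutation symmetries (enjoyed by both $\P_{D_n}$ and $Q$) reduce us to $1 \ge x_1 \ge \cdots \ge x_n \ge 0$ with $\sum_i x_i \le 2$. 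If $x = \0$ we are done; otherwise set $s := \max\bigl(x_1, \tfrac12\sum_i x_i\bigr) \in (0,1]$ and $y := x/s$. A direct check shows $0 \le y_i \le 1$ for all $i$, and either $y_1 = 1$ (when $s = x_1$, placing $y$ in the cross-polytope facet $\{x_1 = 1\}$) or $\sum_i y_i = 2$ (when $s = \tfrac12\sum_i x_i$, placing $y$ in the hypersimplex facet with nonnegative coordinates); in either case $y \in \P_{D_n}$, so $x = sy + (1-s)\0$ lies in $\P_{D_n}$. Once $\P_{D_n} = Q$ is established, the facets of $\P_{D_n}$ are exactly the inequalities defining $Q$, since each was already shown to be tight on an $(n-1)$-dimensional face; this yields the asserted classification.
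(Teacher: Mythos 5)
Your proof is correct, and it diverges from the paper precisely at the one step the paper does not carry out in detail: showing that no facets have been missed. The paper identifies the two candidate families of facets the same way you do (by exhibiting the vertex sets on the hyperplanes $\sum_i \sigma_i x_i = 2$ and $\sigma_i x_i = 1$), but for completeness it appeals to the Wythoff construction for the Coxeter diagram of type $D_n$, reading off the facet types from connected subdiagrams containing the ringed node and invoking Coxeter's counting method. You instead prove the half-space description $\P_{D_n}=Q$ directly: after using the sign-flip and permutation symmetries to reduce to $1\ge x_1\ge\cdots\ge x_n\ge 0$ with $\sum_i x_i\le 2$, your rescaling by $s=\max\bigl(x_1,\tfrac12\sum_i x_i\bigr)$ pushes any nonzero point of $Q$ onto either a cross-polytope facet (when $y_1=1$; note $\sum_{j\ge2}y_j\le 1$ follows from $\sum_i x_i\le 2x_1$) or the nonnegative hypersimplex facet (when $\sum_i y_i=2$), and convexity with the origin finishes the inclusion $Q\subseteq\P_{D_n}$. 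This buys a self-contained, elementary argument that needs no reference to reflection groups, at the cost of a page of case-checking that the Coxeter machinery packages away; it also hands you the irredundant inequality description of $\P_{D_n}$ explicitly, which the paper never writes down. Two small points worth a sentence each in a final write-up: the facet counts as stated require $n\ge3$ (for $n=2$ the ``hypersimplex facets'' degenerate to vertices), an issue the paper shares implicitly, and you should note that the $2^n+2n$ supporting hyperplanes cut out pairwise distinct faces, which is clear from the listed vertex sets.
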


\begin{proof}
  The first two statements are immediate. Next, we claim that the
  facet-defining inequalities come in two families, $\sum_{i=1}^d
  \sigma_i x_i \le 2$ and $\sigma_i x_i\le 1$, where
  $\sigma_i\in\{\pm1\}$.
  % The $2^n$ facets of the first type are $(n-1)$-dimensional
  % hypersimplices $\Delta_{2,n}$; they are complemented by $2n$
  % cross-polytopes of dimension $n-1$ in the second family.
  To verify that they all describe facets of $\P_{D_n}$, it suffices
  to consider the case $\sigma_i=1$ for all $i$. The vertices
  in~$\P_{D_n}$ with $x_1+\dots+x_d=2$ are just the
  permutations of $(1,1,0,\dots,0)$, which by definition are the
  vertices of $\Delta_{2,n}$. For the second family, we can
  additionally assume $i=1$, so that the set of vertices in $\P_{D_n}$
  satisfying $x_1=1$ is exactly
  $(1,\pm1,0,\dots,0),\dots,(1,0,\dots,0,\pm1)$, the set of vertices
  of an $(n-1)$-dimensional cross-polytope $\Diamond_{n-1}$.

  Convincing ourselves that we didn't miss any facets of $\P_{D_n}$
  can be done directly, but it also follows quickly using the fact
  that   $\P_{D_n}$ arises from the \emph{Wythoff
    construction}~\cite{Coxeter63} associated to the following
  diagram:
\begin{center}
  \begin{picture}(100,15)(0,-5)
    \thicklines
    \put(0,0){\circle{5}}
    \put(6,2){\text{\small 4}}
    \put(2.5,0){\line(1,0){10}}
    \put(15,0){\circle{5}}
    \put(17.5,0){\line(1,0){10}}
    \put(30,0){\circle{5}}
    \put(32.5,0){\line(1,0){4}}

    \put(42,0){\circle{1}}
    \put(45,0){\circle{1}}
    \put(48,0){\circle{1}}

    \put(53.5,0){\line(1,0){4}}
    \put(60,0){\circle{5}}
    \put(62.5,0){\line(1,0){10}}
    \put(75,0){\circle{5}}
    \put(77.5,0){\line(1,0){10}}
    \put(90,0){\circle{5}}
    \put(90,0){\circle{10}}
    \put(92.5,0){\line(1,0){10}}
    \put(105,0){\circle{5}}
  \end{picture}
\end{center}
We can read off the facets of $\P_{D_n}$ from this diagram by forming
connected subgraphs with $n-1$ vertices that contain the ``ringed''
node. There are exactly two such subgraphs,
corresponding to cross-polytopes $\Diamond_{n-1}$ and hypersimplices
$\Delta_{n,2}$, respectively. The counting method detailed in
\cite{Coxeter63} now assures us that we have accounted for all facets
of $\P_{D_n}$.
\end{proof}
\medskip To calculate the generating function of the $f$-vector of our 
triangulation $\triang\partial\P_{D_n}$ of the boundary of $\P_{D_n}$,
we split 
\[
   \triang\partial \P_{D_n} 
   \ = \
   \triang\P_{D_n,\total\interior\Diamond_{n-1}}
   \ \cup \ 
   \triang\P_{D_n,\total\Delta_{2,n}}
\]
into the disjoint union 
% $\triang \P_{D_n,\total\interior\Diamond_{n-1}}$
of all faces in the \emph{interior} of a cross-polytope facet,
respectively \emph{all} faces
% $\triang \P_{D_n,\total\Delta_{2,n}}$
contained in a hypersimplex facet.

\subsection{Triangulating the cross-polytope facets}

We start with $\triang \P_{D_n,\total\interior\Diamond_{n-1}}$. Since
cross-polytopes are simplicial, the way in which we choose to
triangulate them will not affect the triangulations of the
hypersimplex facets, and the entire boundary of the cross-polytope
will be contained in the triangulation. More precisely, there are
$\binom{m}{k+1} 2^{k+1}$ faces of dimension~$k$ in the boundary
complex of an $m$-dimensional cross-polytope; we include the empty
face by allowing $k=-1$.  These numbers assemble into the generating
function
\[
  f_{\partial\Diamond_m} (x)
  \ = \ 
  \sum_{k=-1}^{m-1} \binom{m}{k+1}2^{k+1} x^k
  \ = \
  \frac1x \sum_{k=0}^m \binom{m}{k} (2x)^k
  \ = \
  \frac{(1+2x)^m}{x}.
\]
We now extend the boundary to a unimodular triangulation of
$\Diamond_{n-1}$. However, just as in Section~\ref{sec:Cn}, we must be
careful to only use lattice points \emph{from our root lattice};
for~$D_n$ this means that we must exclude the centroid of
each~$\Diamond_{n-1}$ from the triangulation, because it has odd
coordinate sum.

Instead, setting $m=n-1$, we need to triangulate the interior of
$\Diamond_{m}$ without using any new vertices.  There is still not much
choice in the matter, since all such triangulations are
combinatorially isomorphic; they are determined by the choice of a
diagonal connecting two opposite vertices, say $v_{2m-1}$ and
$v_{2m}$. The faces of the triangulation of $\Diamond_m$ are then
either faces of the boundary $\partial\Diamond_m$, or joins
$\{v_{2m-1},v_{2m}\}\star F$, where $F$ is a face of the
``equatorial'' $\partial\Diamond_{m-1}$. Here we include the case
$F=\emptyset$, which we take to yield the diagonal
$\conv\{v_{2m-1},v_{2m}\}$ itself.  However, we \emph{exclude} the
faces in the boundary $\partial\Diamond_m$ from our count, because
these will be included in the generating function of the triangulation
of the hypersimplex facets.  Thus, the $f$-vector of interior faces of
our triangulation of $\Diamond_{n-1}$ is
\[
   f_{\interior\Diamond_{n-1}}(x) 
   \ = \
   x^2 f_{\partial\Diamond_{n-2}}(x)
   \ = \ 
   (1+2x)^{n-2},
\]
and these values assemble into the following generating function for the
number of interior faces of~$\Diamond_{n-1}$:
\[
   F_{\interior\Diamond_{n-1}}(z,x) 
   \ = \
   \sum_{n\ge2} f_{\interior\Diamond_{n-1}}(x) z^n
   \ = \
   \sum_{n\ge2} x (1+2x)^{n-2} z^n 
   \ = \
   - \frac{x z^2}{2xz+z-1}.
\]
Because $\P_{D_n}$ has $2n$ cross-polytope facets, we obtain the total
count of such interior faces as
\[
   F_{D_n,\total\interior\Diamond_{n-1}} (z,x) \ = \ 
   2z \frac{\partial}{\partial z}  F_{\interior\Diamond_{n-1}}(z,x)
   \ = \
   -\frac{2xz^2(2xz+z-2)}{(2xz+z-1)^2}.
\]
For the $f$-vector, this means
\[
   f_k^{\total\interior\Diamond_{n-1}} \ = \
   n \, 2^{k}\binom{n-2}{k-1}.
\]
Note that there is no double-counting here, because the interior faces
of the triangulations of the cross-polytope facets are disjoint.

\subsection{Triangulating the hypersimplex facets}

Our next job is to calculate the number $f_k(D_n,\total\Delta_{2,n})$
of $k$-dimensional faces of a unimodular triangulation of the
hypersimplex facets of~$\P_{D_n}$.

The unimodular triangulation of the hypersimplex $\Delta_{2,n}$ that
we will use has an interesting history. Two such triangulations were
constructed independently by Stanley~\cite{Stanley77} in~1977 and by
Sturmfels~\cite{sturmfels} in~1996. However, in~2006, Lam and
Postnikov~\cite{Lam-Postnikov06} showed that these triangulations are
in fact the same, and gave two more independent descriptions.

Let $f_k^{(n)}$ count the number of $k$-faces of this ``standard''
triangulation of the $(n-1)$-dimensional hypersimplex $\Delta_{2,n}$.
These numbers were calculated by Lam and Postnikov, and are given
implicitly in \cite[Remark~5.4]{Lam-Postnikov06} via the generating
function
\begin{eqnarray}
   F_\Delta(z,x) 
   & = &
   \sum_{n\ge 2} z^n \sum_{k=0}^{n-1} f_k^{(n)} x^k \notag\\
   & = &
   \frac{z^2}{1-z}\cdot
   \frac{1+(1+x)z(z+x-2)}{(xz+z-1)^2\big((1+x)z(z-2)+1\big)}
   \label{eq:lam}\\
   & = &
   z^2 + (3 + 3x + x^2) z^3 + 
   (6 + 13x + 12x^2 + 4x^3) z^4 + O(z^5) .\notag
\end{eqnarray}

Some remarks are in order here: First, we have removed a factor `$x$'
from the formula in \cite[Remark~5.4]{Lam-Postnikov06}, because
Lam and Postnikov's normalization of the $f$-vector is different from
ours. Secondly, in this subsection we do not include a count for the
empty face in our generating functions, because we want to combine
triangulations into larger ones, but prefer not to be bothered by the
fact that each triangulation has a \emph{unique} empty face; we will
remedy this starting from Section~\ref{sec:dncombine}. And finally, note
that the $z^2$~term in the formula corresponds to the unique
$0$-dimensional face of~$\Delta(2,2)$.

These remarks out of the way, we count the number of
$(k\ge0)$-dimensional faces of $\triang \P_{D_n,\total\Delta_{2,n}}$
by an inclusion-exclusion argument, making use of the following two
facts: For $k\ne1$, each $k$-dimensional face of a hypersimplex is
again a hypersimplex; and for $n\ge3$ each hypersimplex facet
of~$\P_{D_n}$ is adjacent to exactly $n$ other hypersimplex
facets. (There is an additional oddity for $n=3$, in that it remains
true that each of the 8~triangles~$\Delta(3,2)$ in $\P_{D_3}$ is
adjacent to exactly $3$~other ones, but the adjacency happens via
vertices, not codimension~1 faces.)

The count must start out $2^n f_k^{(n)}$, but then we have overcounted
the $k$-faces in the intersections of two hypersimplex facets. For
each such intersection, we must subtract $f_k^{(n-1)}$, so it remains
to calculate the number of such adjacencies of hypersimplex
facets. Without loss of generality, we can assume one of the
hypersimplex facets to lie in the hyperplane $\sum_{i=1}^n x_i =
\langle a,x\rangle = 2$, where $a=(1,\dots,1)$ is the all-1 vector.
This facet $F$ is adjacent to $n$~other hypersimplex facets
$F_1,\dots, F_n$, namely the ones defined by the normal vectors
obtained from $a$ by changing exactly one 1 to $-1$. A normal
vector that selects the $(n-2)$-dimensional hypersimplex $F\cap F_j$
is obtained from $a$ by setting the $j$-th entry to $0$.

For $i$ with $1\le i\le n-2$, any $(n-i)$-dimensional
hypersimplex that is the intersection of $i$ hypersimplex facets of
$\P_{D_n}$ lies in the hyperplane $\langle a',x\rangle = 2$, where
$a'$ is a vector with $i-1$ entries~`$0$' and the remaining $n-i+1$
entries either `$+1$' or `$-1$'. Since there are
$\binom{n}{i-1}2^{n-i+1}$ such vectors, we obtain
\begin{eqnarray}
    f_k(D_n,\total\Delta_{2,n}) 
    & = &
    \sum_{i=1}^{n-2} (-1)^{i-1} \binom{n}{i-1} 2^{n-i+1}
    f_k^{(n-i+1)}\label{eq:fksum1}\\
    & = &
    \sum_{j\ge0} (-1)^j \binom{n}{j} 2^{n-j} f_k^{(n-j)}.\notag
\end{eqnarray}
Note that the last sum only runs up to $n-k$ because  $f_k^{(l)}=0$
for $k>l$.

Let's discuss some special cases: The sum \eqref{eq:fksum1} is also
valid for $n=3$; for $i\ge n$ because $f^{(l)}_k=0$ for $l\le 1$ and
all $k$; and for $i=n-1$ (the case of edges) because $f^{(2)}_0=1$ is
the only nonzero value of $f^{(2)}$, so that
$f_0(D_n,\total\Delta_{2,n})$ is the only affected term; but the
number of vertices works out correctly.  This comes about because
$f_0(\Delta_{2,n})=\binom{n}{2}$, so that
\[
  f_0(D_n,\total\Delta_{2,n}) 
  \ = \
  \sum_{j\ge0} (-1)^j \binom{n}{j} 2^{n-2} \binom{n-j}{2}
  \ = \
  2n(n-1),
\]
the correct number of vertices.

We proceed to calculate the corresponding
generating functions:
\begin{eqnarray*}
    f_{D_n,\total\Delta_{2,n}} (x)
    &=& 
    \sum_{k=0}^{n-1} f_k(D_n,\total\Delta_{2,n}) x^k\\
    &=& 
    \sum_{i=0}^{n-1} (-1)^i \binom{n}{i} 2^{n-i} 
    \sum_{k=0}^{n-i} f_k^{(n-i)} x^k \\
    &=&
    \sum_{i=0}^{n-1} (-1)^i \binom{n}{i} 2^{n-i} 
    f^{(n-i)} (x) \\
    &=&
    (-1)^n \sum_{i=1}^{n} (-2)^i\binom{n}{i} 
    f^{(i)} (x) ,
\end{eqnarray*}
and 
\begin{eqnarray*}
  F_{D_n,\total\Delta_{2,n}}(z,x)
  &=&
  \sum_{n\ge 2} f_{D_n,\total\Delta_{2,n}}(x) z^n \\
  &=&
  \sum_{i\ge 1} (-2)^i f^{(i)}(x) 
  \sum_{n\ge i} (-z)^n \binom{n}{i} \\
  &=&
  \sum_{i\ge 1} \frac{(2z)^i}{(1+z)^{i+1}} f^{(i)} (x)\\ 
  &=&
  \frac{1}{1+z} 
  \sum_{i\ge1} \left(\frac{2z}{1+z}\right)^i f^{(i)}(x) \\
  &=& 
  \frac{1}{1+z} 
  F_\Delta\left( \frac{2z}{1+z}, x \right) \\
  &=&
  \frac{4z^2}{(z-1)^2-4xz}
  \left(\frac{2xz(x+1)}{(2xz+z-1)^2} - \frac{1}{z-1}\right),
\end{eqnarray*}
where $F_\Delta(z,x)=\sum_{i\ge0} f^{(i)}(x)z^i$ is the generating
function of the $f$-vectors of triangulations of the second
hypersimplex from~\eqref{eq:lam}; note that
$f^{(0)}(x)=f^{(1)}(x)=0$.

\subsection{The total generating function}\label{sec:dncombine}

It remains to combine the two previous generating functions:
\[
    F_{\triang\partial\P_{D_n}}(z,x) 
    \ = \ 
      F_{D_n,\total\interior\Diamond_{n-1}}(z,x) 
    + F_{D_n,\total\Delta_{2,n}}(z,x) 
    + \frac1x \sum_{n\ge2} z^n,
\]
where the last term corresponds to a count for the empty face in each
dimension. We could now extract the $f$-vector via a %medium-sized, but
routine calculation;
%using the
%factorization $(z-1)^2-4xz=(z-1+2\sqrt{xz})(z-1-2\sqrt{xz})$  
however, it will be easier to do this with the
$h$-vector in hand. 

For this, we form a unimodular triangulation~$\Delta$ of the interior
of $\P_{D_n}$ by coning over $\triang\partial\P_{D_n}$ from the
origin. However, since we are interested only in the $h$-vector of the
resulting triangulation, we can use two well-known properties of
$h$-polynomials to simplify things.  Namely, the $h$-polynomial
$h_{\Delta_1\star\Delta_2}(x)=h_{\Delta_1\star\Delta_2}$ of the join
of two simplicial complexes equals the product $h_{\Delta_1}
h_{\Delta_2}$ of the individual $h$-polynomials; moreover, the
$h$-polynomial of a point is $h_{\{0\}}=1$. Thus,
\[
   h_{\Delta}
   \ = \
   h_{(\triang\partial\P_{D_n})\star\{0\}} 
   \ = \ 
   h_{\triang\partial\P_{D_n}}, 
\]
and it suffices to calculate the $h$-polynomial corresponding to
$\triang\partial\P_{D_n}$.
For this, we need to re-index each polynomial
\[
  [z^n]F_{\triang\partial\P_{D_n}}(z,x) 
  \ = \ 
  \sum_{i=0}^{n} f_{i-1}x^{i-1} 
\]
to read instead $\sum_{i=0}^{n} f_{i-1} x^{n-i}$; we achieve this by
setting
\[
   \widetilde F_{\triang\partial\P_{D_n}}(z,x) 
   \ = \ 
   \frac{1}{x} F_{\triang\partial\P_{D_n}}\bigg(zx,\frac1x\bigg). 
\]
Since $h_{\Delta}(x)=f_{\Delta}(x-1)$, we obtain the generating
function for the $h$-polynomials as
\begin{eqnarray*}
  H_{D}(z,x) &=& \widetilde F_{\triang\partial\P_{D_n}}(z,x-1)\\
  &=&
  z^2\frac{(1+x)^2 - 3(1+x)(1+x^2)z + (3+x^2)(1+3x^2)z^2 -
    (x-1)^2(1+x)(1+x^2)z^3}{(xz+z-1)^2(1-2z(1+x)+z^2(x-1)^2)}\\
  &=&
  \frac{z^2}{2}\left(
    \frac{1}{\frac{1}{(\sqrt{x}-1)^4} - \frac{z}{(\sqrt{x}-1)^2}} + 
    \frac{1}{\frac{1}{(\sqrt{x}+1)^4} - \frac{z}{(\sqrt{x}+1)^2}}
  \right)
  +\frac{2xz^2(xz+z-2)}{(xz+z-1)^2} \\
  &=& \phantom{+{}}
  \big(1 + 2 x + x^2 \big) z^2 \\ &&
  + \big(1 + 9 x + 9 x^2 + x^3 \big) z^3 \\ &&
  + \big(1 + 20 x + 54 x^2 + 20 x^3 + x^4 \big) z^4 \\ &&
  + \big(1 + 35 x + 180 x^2 + 180 x^3 + 35 x^4 + x^5 \big) z^5
  + O(z^6).
\end{eqnarray*}
It is now straightforward to check that $H_{D}(z,x)=\sum_{n\ge2} p_n(x)
z^n$ is the generating function for Colin Mallows's formula conjectured in
Conway and Sloane~\cite{Conway-Sloane91}:
\[
   p_n(x) \ = \
   \frac{(1+\sqrt{x})^{2n} + (1-\sqrt{x})^{2n}}{2} -
   2nx(1+x)^{n-2},
\]
and from this we extract the $f$-vector stated in Theorem 1 without
too much difficulty.

\subsection{The growth series of $C_n$: reprise}\label{sec:reprise}

We are now in a position to give a triangulation-theoretic derivation
of the growth series of $C_n$. For this, notice from the description
in Section~\ref{sec:Cn} that the lattice points in~$\P_{C_n}$ are the
vertices of $\P_{D_n}$, together with the vertices of~$2\Diamond_n$.
We can therefore build a unimodular triangulation of the boundary of
$\P_{C_n}$ by starting from our unimodular triangulation of
$\P_{D_n}$. However, instead of triangulating the $2n$~cross-polytope
facets, we cone over each such facet $F$ from the corresponding vertex
$v_F$ of $2\Diamond_n$.
%
%\ojo{JP: We need a picture like Figure \ref{C3} here.}
%
By projecting $v_F$ into the barycenter of~$F$, we
see that counting the number of up to $(n-1)$-dimensional faces added
by each coning operation amounts to counting the total number of
interior faces in the triangulation of $\Diamond_{n-1}$ obtained by
coning over the origin. These numbers, in turn, are encoded in the
generating function
\[
   f_{\interior(\{0\}\star\partial\Diamond_{n-1})}(x) 
   \ = \
   x\left( f_{\partial\Diamond_{n-1}}(x) - \frac1x \right) + 1 
   \ = \
   (1+2x)^{n-1}.
\]
As before, the faces corresponding to the cones over different
cross-polytope facets are disjoint, so that their total number is
counted by the generating function
\[
  F_{C_n,\total\Diamond_{n-1}}(z,x) 
  \ = \
  2z \frac{\partial}{\partial z}
  \sum_{n\ge2} (1+2x)^{n-1} z^n 
  \ = \
  -\frac{2(2x+1) z^2 ( 2xz+z - 2)}{(2xz+z-1)^2}.
\]
Combining generating functions as before, we obtain
\begin{eqnarray*}
  (x-1)H_C (z,x)
  &=&
  \phantom{{}+{}}
  F_{C_n,\total\Diamond_{n-1}}\left(z(x-1),\frac{1}{x-1}\right) \\
  &&{}+ F_{D_n,\total\Delta_{n,2}} \left(z(x-1),\frac{1}{x-1}\right) \\ 
  &&{}+(x-1) \sum_{n\ge2} \left(z(x-1)\right)^n.
\end{eqnarray*}
%so that
\begin{eqnarray*}
  H_C(z,x)
  &=&
  z^2 \frac{1+6x+x^2 - (x-1)^2(x+1)z}%
  {1 - 2(x+1)z + (x-1)^2 z^2} \\
  &=&
  \phantom{+{}}
  \big(1 + 6 x + x^2 \big) z^2 \\ && 
  + \big(1 + 15 x + 15 x^2 + x^3 \big) z^3 \\ && 
  + \big(1 + 28 x + 70 x^2 + 28 x^3 + x^4\big) z^4 \\ && 
  + \big(1 + 45 x + 210 x^2 + 210 x^3 + 45 x^4 + x^5 \big) z^5
  + O(z^6).
\end{eqnarray*}
Again, it is easily checked that $H_C(z,x) = \sum_{n\ge2} p_n(x) z^n$
with
\[
   p_n(x) \ = \ \sum_{k=0}^n \binom{2n}{2k} x^k.
\]

\bibliographystyle{amsplain}
\bibliography{growth-series.final}

\end{document}